\newfont{\bb}{msbm10 at 12pt}
\newfont{\bbp}{msbm10 at 9pt}
\def\d{\hbox{\bb D}}
\def\r{\hbox{\bb R}}
\def\rt{\hbox{\bbp R}}
\def\h{\hbox{\bb H}}
\def\mr{\hbox{\bb M}^2\times\hbox{\bb R}}
\def\hr{\hbox{\bb H}^2\times\hbox{\bb R}}
\def\man{\mathcal{M}}
\def\hm{\mathcal{M}(\kappa , \tau)}
\def\hmf{\hbox{\bb E}(\kappa , \tau)}
\def\m{\hbox{\bb M}^2}
\def\c{\hbox{\bb C}}
\def\s{\hbox{\bb S}}
\newcommand{\camb}{\overline{\nabla}}
\newcommand{\norm}[1]{\left\Vert #1 \right\Vert}
\newcommand{\abs}[1]{\left\vert #1 \right\vert}
\newcommand{\set}[1]{\left\{#1\right\}}
\newcommand{\meta}[2]{\langle #1,#2 \rangle }
\newcommand{\ext}{\wedge}
\newcommand{\campo}{\mathfrak{X}}
\newcommand{\zb}{\overline{z}}
\newcommand{\wb}{\overline{w}}
\newcommand{\bi}{\partial _{\infty}}
\begin{document}

\newtheorem{theorem}{Theorem}[section]
\newtheorem{proposition}[theorem]{Proposition}
\newtheorem{definition}[theorem]{Definition}
\newtheorem{lemma}[theorem]{Lemma}
\newtheorem{corollary}[theorem]{Corollary}
\newtheorem{remark}[theorem]{Remark}
\newtheorem{claim}[theorem]{Claim}

\newenvironment{proof}{\smallskip\noindent{\it Proof.}\hskip \labelsep}
                        {\hfill\penalty10000\raisebox{-.09em}{$\Box$}\par\medskip}

\title{When strictly locally convex hypersurfaces are embedded}

\author{Jos\'{e} M. Espinar\thanks{The author is partially
supported by Spanish MEC-FEDER Grant MTM2007-65249, and Regional J. Andaluc\'{i}a
Grants P06-FQM-01642 and FQM325}, Harold Rosenberg$\mbox{}^{\dag}$}
\date{}

\maketitle

\vspace{.4cm}

\noindent $\mbox{}^{\ast}$ D\'{e}partement de Math\'{e}matiques, Universit\'{e}
París-Est Marne-la-Vall\'{e}e, Cit\'{e} Descartes, 5 boulevard Descartes,
Champs-sur-Marne 77454, Marne-la-Vall\'{e}e, Cedex 2, France; e-mail:
jespinar@ugr.es\vspace{0.2cm}

\noindent $\mbox{}^{\dag}$ Instituto de Matematica Pura y Aplicada, 110 Estrada Dona
Castorina, Rio de Janeiro 22460-320, Brazil; e-mail: rosen@impa.br

\vspace{.3cm}

\begin{abstract}
In this paper we will prove Hadamard-Stoker type theorems in the following ambient
spaces: $\man ^n \times \r$, where $\man ^n $ is a $1/4-$pinched manifold, and
certain Killing submersions, e.g., Berger spheres and Heisenberg spaces. That is,
under the condition that the principal curvatures of an immersed hypersurfaces are
greater than some non-negative constant (depending on the ambient space), we prove
that such a hypersurface is embedded and we also study its topology.
\end{abstract}

\section{Introduction}

Hadamard proved a strictly compact locally convex hypersurface immersed in $\r ^n$
is an embedded sphere \cite{H}. Stoker then generalized this to complete immersed
strictly convex hypersurfaces in $\r ^n$: they are embedded spheres or $\r ^{n-1}$
\cite{S} Do Carmo and Warner \cite{CW} extended Hadamard's Theorem to $\s ^n$ and
$\h ^n$: such a compact hypersurface of $\s ^n$ is an embedded $\s ^{n-1}$ contained
in a hemisphere of $\s ^n$, and an embedded sphere in $\h ^n$. Currier extended
Stoker's Theorem to $\h ^n$, assuming all the principal curvatures are at least one
\cite{C}. S. Alexander \cite{A} proved Hadamard's Theorem in strict $-\kappa$
Hadamard manifolds assuming the principal curvatures are at least $-\kappa$.

We consider convexity in other ambient spaces; distinct from the space forms. The
Hadamard-Stoker Theorem was proved in $\hr$: a complete immersed surface in $\hr$ of
positive extrinsic curvature, is an embedded sphere or plane \cite{EGR}. Also, in
\cite{ES}, the authors generalized the above result to Killing submersions over a
strict Hadamard surface. For related results about locally convex hypersurfaces in
non-negatively curved manifolds see \cite{Esc}, and \cite{A} for locally convex
hypersurfaces in non-positively curved manifolds.

In this paper we will prove Hadamard-Stoker type theorems in the following ambient
spaces:

\begin{itemize}
\item $\man ^n \times \r$, where $\man ^n$ is a $1/4-$pinched manifold. We will see
that the $1/4-$pinched assumption is necessary (see Remark \ref{Rem:immersed}).
\item Certain Killing submersions, e.g., Berger spheres and Heisenberg spaces.
\end{itemize}

We begin Section \ref{Sect:14pinched} by studying the embeddedness of a family of
strictly convex hypersurfaces in $1/4-$pinched manifolds $\man ^n$, i.e. $\man ^n $
is a compact $n-$manifold whose sectional curvatures, $K_s$, are strictly positive.
Also, if $\kappa ^+ $ and $\kappa ^-$ denote the maximum and minimum of the
sectional curvatures on $\man ^n$ respectively, then, they verify $\kappa ^- /
\kappa ^+> 1/4$. More precisely, we prove (cf. \cite{Sc} for a relate use of this
idea):

\begin{quote}
{\bf Lemma \ref{Lem:emb}:} {\it Let $D^n$ and $\man ^n$ be $n-$dimensional
manifolds, $D^n$ compact with non-empty boundary $\Sigma$. Assume $g(t)$ and $h(t)$,
$0 \leq t \leq 1$, are continuous families of metrics on $D^n$ and $\man ^n $
respectively, and each $h(t)$ is $1/4-$pinched.

Let $f_t : (D^n , g(t)) \to (\man ^n , h(t))$ be isometric immersions, $0 \leq t
\leq 1$, continuous in $t$. Suppose $f_t (\Sigma):=\Sigma (t)$ has positive
principal curvatures for all $t$ (w.r.t. the normal pointing into $D^n$).

If $f_0$ is an embedding, then so is $f_t$ for all $t$.}
\end{quote}

Lemma \ref{Lem:emb} allows us to prove the following results in product spaces:

\begin{quote}
{\bf Theorem \ref{Theo:HScompact}:} {\it Let $\Sigma \subset \man^n  \times \r$ be a
locally strictly convex properly immersed connected hypersurface, where $\man ^n$ is
a $1/4-$pinched manifold. Then $\Sigma $ is properly embedded and homeomorphic to
the $n-$sphere or to the Euclidean $n-$space. In the later case, $\Sigma$ has either
a top end or a bottom end.}
\end{quote}

Also,

\begin{quote}
{\bf Theorem \ref{Theo:HSTorus}:} {\it Let $\Sigma \subset \man ^n \times \s ^1$ be
a complete immersed hypersurface whose principal curvatures are greater than $c$ at
any point of $\Sigma$. Assume also that $\man ^n$ is a $1/4-$pinched sphere, where
$\kappa ^- $ and $\kappa ^+$ denote the minimum and maximum of the sectional
curvatures of $\man ^n$ respectively. We normalize so that $\kappa ^+ =1$. If $c > 2
$ , then $\Sigma$ is an embedded sphere.}
\end{quote}

And for surfaces, we obtain:

\begin{quote}
{\bf Theorem \ref{Theo:SxR}:} {\it Let $\Sigma \subset \s ^2  \times \r$ be a
complete connected surface with constant positive extrinsic curvature. Then $\Sigma
$ is a rotational sphere in $\s^2 \times \r$.}
\end{quote}

We continue Section \ref{sect:HKSubmersion} considering strictly convex surfaces
immersed in a Hadamard-Killing submersion. We first establish the necessary tools we
will use in the proof of

\begin{quote}
{\bf Theorem \ref{Theo:HSzero}:} {\it Let $ \Sigma \subset \hm$ be a complete
connected immersed surface so that $k_{i}(p)
> |\tau (p)|$ for all $p\in \Sigma$, where $\hm $ is a Hadamard-Killing submersion.
Then $\Sigma$ is properly embedded. Moreover, $\Sigma$ is homeomorphic to
$\mathbb{S}^{2}$ or to $\mathbb{R}^{2}$. In the later case, when $\Sigma$ has no
point $p$ at which $N(p)$ is horizontal, $\Sigma$ is a Killing graph over a convex
domain of $\m$.}
\end{quote}

We should remark the the above Theorem \ref{Theo:HSzero} gives a Hadamard-Stoker
type Theorem in Heisenberg space.

Section \ref{Sect:Berger} is devoted to convex surfaces immersed in a Berger sphere.
Here, we prove

\begin{quote}
{\bf Theorem \ref{Theo:Berger}:} {\it Let $\Sigma \subset \s ^3 _B (\kappa , \tau)$
be a complete connected immersed surface so that $|k_i (p)|\geq \abs{\frac{\kappa -
4\tau ^2}{4\tau}}$ for all $p \in \Sigma$, here $k_i$, $i=1,2$, denotes the
principal curvatures of the immersion. Then, $\Sigma$ is embedded and homeomorphic
to a sphere.}
\end{quote}

Moreover, we will see how to prove Theorem \ref{Theo:HSzero} in the particular case
of Heisenberg space, using the techniques developed in Section \ref{Sect:Berger}

\section{$1/4-$pinched manifolds}\label{Sect:14pinched}

In this Section, we focus our attention on $1/4-$pinched manifolds.

\begin{definition}\label{Def:14pinched}
Let $\man ^n $ be a compact $n-$manifold whose sectional curvatures, $K_s$, are
strictly positive. Let $\kappa ^+ $ and $\kappa ^-$ denote the maximum and minimum
of the sectional curvatures on $\man ^n$ respectively. Then, we say that $\man $ is
$1/4-$pinched if $\kappa ^- / \kappa ^+ > 1/4$.
\end{definition}

First, we establish a Lemma about embeddedness of a family of closed strictly convex
submanifolds in a $1/4-$pinched manifolds, which will be the key result for
applications in what follows, and it is, in fact, of independent interest.

\begin{lemma}\label{Lem:emb}
Let $D^n$ and $\man ^n$ be $n-$dimensional manifolds, $D^n$ compact with non-empty
boundary $\Sigma$. Assume $g(t)$ and $h(t)$, $0 \leq t \leq 1$, are continuous
families of metrics on $D^n$ and $\man ^n $ respectively, and each $h(t)$ is
$1/4-$pinched.

Let $f_t : (D^n , g(t)) \to (\man ^n , h(t))$ be isometric immersions, $0 \leq t
\leq 1$, continuous in $t$. Suppose $f_t (\Sigma):=\Sigma (t)$ has positive
principal curvatures for all $t$ (w.r.t. the normal pointing into $D^n$).

If $f_0$ is an embedding, then so is $f_t$ for all $t$.
\end{lemma}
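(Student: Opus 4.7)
The strategy is a connectedness argument. Let $A = \{ t \in [0,1] : f_t \text{ is an embedding} \}$. Since $0 \in A$ by hypothesis and $[0,1]$ is connected, it suffices to show $A$ is both open and closed in $[0,1]$.

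For the openness, fix $t_0 \in A$. Two scales need to be controlled. At the macroscopic scale, the function
\[
m(t,\delta) \;=\; \inf \{\, d_{h(t)}(f_t(p),f_t(q)) \;:\; p,q\in D^n,\ d_{g(t)}(p,q)\geq\delta\,\}
\]
depends continuously on $t$ by compactness of $D^n$ and continuous dependence of the data, and is strictly positive at $t_0$; hence it remains positive for $t$ near $t_0$, ruling out collisions between pairs of points that stay apart in $D^n$. At the microscopic scale ($d_{g(t)}(p,q)<\delta$), injectivity follows from $f_t$ being a local diffeomorphism together with the uniform geometry near $\Sigma$ provided by strict convexity and compactness of the boundary.

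For the closedness, take $t_n\to t_*$ with $t_n\in A$ and suppose, for contradiction, that $f_{t_*}$ is not injective. Then there exist $p\neq q$ in $D^n$ with $x:=f_{t_*}(p)=f_{t_*}(q)$. A standard limiting argument, using that each $f_{t_n}(D^n)$ is an embedded $n$-manifold with boundary $\Sigma(t_n)$ while $f_{t_n}(p)\neq f_{t_n}(q)$ both converge to $x$, forces $p,q\in\partial D^n=\Sigma$, so two sheets of $\Sigma(t_*)$ are mutually tangent at $x$. Let $N_p,N_q$ denote the images of the inward unit normals. Strict convexity of each sheet together with the geometric maximum principle rules out $N_p=N_q$ (the two sheets would then coincide in a neighborhood of $x$, contradicting embeddedness of $f_{t_n}$ for $t_n$ close to $t_*$). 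Hence $N_p=-N_q$: the two strictly convex sheets are tangent at $x$ on opposite sides of the common tangent hyperplane.

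The heart of the proof, and what I expect to be the main obstacle, is ruling out this antipodal tangency using $1/4$-pinching. My plan is to combine Klingenberg's injectivity-radius estimate $\mathrm{inj}(\man^n,h(t))\geq \pi/\sqrt{\kappa^+(t)}$ with the Bonnet--Myers diameter bound $\mathrm{diam}(\man^n,h(t))\leq \pi/\sqrt{\kappa^-(t)}$; the assumption $\kappa^-/\kappa^+>1/4$ then yields $\mathrm{diam}<2\,\mathrm{inj}$, with enough slack to show that any domain bounded by a closed strictly locally convex hypersurface is contained in a single normal ball around any of its interior points. In such a normal ball, a geodesic emanating from an interior point in the direction $N_p$ can meet the strictly convex boundary at most once, but the antipodal configuration $N_p=-N_q$ forces it to meet the boundary twice in the same direction. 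This contradicts strict convexity and completes the closedness, and hence the proof. This is the continuity scheme attributed to \cite{Sc}, adapted to a one-parameter family of ambient metrics.
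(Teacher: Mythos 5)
Your overall scheme (continuity in $t$, reduction of a first self-intersection to a pair of boundary points, and an injectivity-radius versus diameter comparison under $1/4$-pinching) is the same as the paper's, and both the openness step and the reduction to $p,q\in\Sigma$ are fine. But the decisive step --- what you yourself flag as ``the heart of the proof'' --- is left as a plan, and the plan as written does not close. Two concrete problems. First, $1/4$-pinching gives $\mathrm{diam}(\man^n,h(t))\le \pi/\sqrt{\kappa^-(t)}<2\pi/\sqrt{\kappa^+(t)}\le 2\,\mathrm{inj}(\man^n,h(t))$, i.e.\ $\mathrm{diam}<2\,\mathrm{inj}$; your intermediate claim that the immersed domain lies in a single normal ball about any of its interior points would require $\mathrm{diam}<\mathrm{inj}$, which pinching does not give (that would force $\kappa^+<\kappa^-$). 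Second, even granting such a containment, ``a geodesic \dots can meet the strictly convex boundary at most once'' is false as stated (a chord of a round sphere meets it twice), and the sense in which the antipodal tangency ``forces it to meet the boundary twice in the same direction'' is never made precise. So the contradiction is not actually derived.

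The factor of $2$ has to enter elsewhere, and the paper's mechanism shows both where it enters and why no tangency analysis is needed. Since $\dim D^n=\dim\man^n$, each $f_t$ is a local isometry of the interior; since $\Sigma$ is strictly convex with respect to the inward normal, a minimizing path in $(D^n,g(t_*))$ joining the two preimages $p\ne q$ of the double point is a genuine interior geodesic $\beta$ of length $l$. Its image $f_{t_*}\circ\beta$ is then an ambient geodesic loop based at $f_{t_*}(p)=f_{t_*}(q)$, and a geodesic loop has length at least twice the injectivity radius, so $l\ge 2\,\mathrm{inj}\ge 2\pi/\sqrt{\kappa^+}$ by Klingenberg, while Bonnet bounds any minimizing geodesic by $l\le\pi/\sqrt{\kappa^-}$; these are incompatible exactly when $\kappa^-/\kappa^+>1/4$. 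This disposes of every configuration of the normals at once, so your case split $N_p=N_q$ versus $N_p=-N_q$ is an unnecessary detour --- and in the $N_p=N_q$ case the ``geometric maximum principle'' is not the right tool anyway: the open-set stability argument you already use to force $p,q\in\Sigma$ is what rules that case out.
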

\begin{proof}
Since $D^n$ is compact, there exists $\delta >0$ such that $f_t$ is an embedding for
$0 \leq t < \delta$. It suffices to show $f _\delta$ is an embedding as well.

Suppose not, let $x ,y \in D^n$ be distinct points such that $f_\delta (x) =
f_\delta (y)$. If one of the points $\set{x,y}$ is not on $\Sigma$, then one can
find open neighborhoods of $x$ and $y$, $U_x$ and $V_y$, such that $U_x \cap V_y =
\emptyset$ and $f_\delta (U_x) \cap f_\delta (V_y) $ contains an open set of $\man
^n$. But then $f_t$ would not be an embedding for $t<\delta$, $t$ close to $\delta$;
a contradiction. Thus, both $x$ and $y$ are on $\Sigma$ and $f_{\delta | {\rm int}
D^n}$ is an embedding.

Consider $D^n$ with the metric $h(\delta ) = f_\delta ^* (g(\delta))$. Let $\beta$
be a minimizing geodesic of $(D^n , h(\delta))$ joining $x$ to $y$; $\beta$ exists
because $\Sigma$ is strictly convex. Set $l = {\rm Length}(\beta)$.

%
%
%

On the one hand, the injectivity radius of $\man ^n (\delta)$, ${\rm inj}(\man
^n(\delta))$, bounds $l$ from below as
$$  l/2 \geq {\rm inj}(\man ^n(\delta)) \geq \frac{\pi}{\sqrt{\kappa ^+(\delta)}} .$$

On the other hand, the Bonnet Theorem bounds $l$ from above as
$$ l \leq \frac{\pi}{\sqrt{\kappa ^-(\delta)}}. $$

Thus, joining the above inequalities, we obtain
$$ 2 \leq \frac{\sqrt{\kappa ^+(\delta)}}{\sqrt{\kappa ^-(\delta)}} ,$$that is
$$ \frac{\kappa ^-(\delta)}{\kappa ^+(\delta)} \leq 1/4 ,$$which contradicts the $1/4-$pinched
assumption. This proves the Lemma.
\end{proof}

\begin{remark}\label{Rem:14pinched}
The above $1/4-$pinched assumption is necessary as the next example shows. Let
$C(l)$ be the right cylinder of height $l$ and radius $1$ endowed with a flat
metric. Close it up with two spherical caps $S_i$, $i=1,2$, (one on the top and
another on the bottom) of radius $1$ endowed with its standard metric. Now, smooth
the surface $\man ^2 = C \cup S_1 \cup S_2 $ so that it is almost flat on the
cylinder and almost close to $1$ on the spherical caps, and has positive curvature.

So, if $l$ is large enough, it is not hard to see that we can consider a one
parameter family of strictly convex compact curves $\alpha (t)$ that are embedded
for $0< t < t_0$ and they became immersed for $t>t_0$. One only has to consider how
a family of concentric circles in $\r ^2$ becomes immersed on a cylinder as the
radius increases.
\end{remark}

\subsection{Applications}

First we consider strictly convex hypersurfaces $\Sigma$, i.e. all the principal
curvatures of $\Sigma$ are positive for a choice of a unit normal to $\Sigma$,
properly immersed in a product space $\man ^n \times \r$, where $\man ^n$ is a
$1/4-$pinched manifold.

\begin{theorem}\label{Theo:HScompact}
Let $\Sigma \subset \man^n  \times \r$ be a locally strictly convex properly
immersed connected hypersurface, where $\man ^n$ is a $1/4-$pinched manifold. Then
$\Sigma $ is properly embedded and homeomorphic to the $n-$sphere or to the
Euclidean $n-$space. In the later case, $\Sigma$ has either a top end or a bottom
end.
\end{theorem}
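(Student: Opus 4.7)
The plan is to study the height function $h: \Sigma \to \r$, obtained by composing the immersion with the projection onto the $\r$-factor; this function is proper because $\Sigma$ is properly immersed. First I would show that $h$ is a Morse function whose critical points all have index $0$ or $n$. At a critical point $p$, the unit normal $N(p)$ must be vertical, that is $N(p) = \pm \partial_t$; because $\partial_t$ is parallel in the product metric, a direct computation identifies the Hessian of $h$ at $p$ with $\pm II_\Sigma$, and strict local convexity then makes $p$ a non-degenerate local maximum (when $N(p) = \partial_t$) or local minimum (when $N(p) = -\partial_t$).

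Given this Morse structure, the topology follows from standard handle decomposition. Since $\Sigma$ is connected and only $0$- and $n$-handles appear, $\Sigma$ has at most one local minimum and one local maximum. If both exist, $\Sigma$ is compact and homeomorphic to $\s^n$; if exactly one exists, $\Sigma$ is non-compact, homeomorphic to $\r^n$, with a top end if only a minimum exists and a bottom end if only a maximum exists. The remaining case of zero critical points would make $h$ a proper submersion and force $\Sigma \cong L^{n-1} \times \r$ for a closed manifold $L$; this two-ended scenario is ruled out by a separate argument comparing the vertical behavior of $\Sigma$ against strict local convexity.

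For the embedding, I would slice $\Sigma$ by horizontal hypersurfaces and invoke Lemma \ref{Lem:emb}. At a regular value $t$ of $h$, each component of $\Sigma_t := \Sigma \cap (\man^n \times \{t\})$ is a locally strictly convex $(n-1)$-submanifold of the totally geodesic slice $\man^n \times \{t\} \cong \man^n$: decomposing $N = N_h + a\,\partial_t$ and using that $\partial_t$ is parallel, one obtains $II_{\Sigma_t} = II_\Sigma|_{T\Sigma_t} / \|N_h\| > 0$. Just above the minimum of $h$, $\Sigma_t$ is a small embedded sphere bounding a small geodesic ball $D^n \subset \man^n$. I would then apply Lemma \ref{Lem:emb} with ambient the $1/4$-pinched manifold $\man^n$, with $D^n$ playing the role of the abstract compact domain (chosen continuously as $t$ traverses a regular interval of $h$), and with the natural one-parameter family of isometric immersions $f_t : D^n \to \man^n$ whose boundary traces $\Sigma_t$. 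Embeddedness at the base value thus propagates across the whole interval, and passing through extrema of $h$ (where $D^n$ collapses to a point) is immediate from the index-$0$ or index-$n$ local normal form. Piecing the embedded slices back together yields the global embedding of $\Sigma$.

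The main obstacle is the precise setup for Lemma \ref{Lem:emb}: producing a continuous family of compact abstract $n$-domains $D^n(t)$ together with compatible isometric immersions into $\man^n$ whose boundaries track the slices $\Sigma_t$ between consecutive critical values of $h$. Once that bookkeeping is in place, the $1/4$-pinched hypothesis and the lemma directly yield persistence of embeddedness, and the Morse analysis at the extrema closes out both the topology and the global embedding of $\Sigma$.
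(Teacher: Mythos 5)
Your embedding argument is essentially the paper's: slice by the totally geodesic leaves $\man ^n\times\set{t}$, observe that transverse slices are compact and strictly convex in $\man ^n$ (your computation $II_{\Sigma_t}=II_\Sigma/\norm{N_h}$ is exactly the justification the paper leaves implicit), start from a small embedded sphere just above the lowest point, and propagate embeddedness with Lemma \ref{Lem:emb}. The ``bookkeeping'' you flag --- producing the continuous family of filled-in domains $f_t:D^n\to\man ^n$ --- is glossed over to the same degree in the paper, so you are not worse off there. Where you genuinely differ is the topology: the paper applies the Gauss equation (in the product the ambient sectional curvatures are nonnegative and $II>0$ forces $\det(II|_\pi)>0$ on every $2$-plane, so $K_\Sigma>0$) and then Perelman's soul theorem to conclude that $\Sigma$ is $\s ^n$ or $\r ^n$ with a single end; you instead run Morse theory on the proper height function $h$, whose critical points are exactly the points with vertical normal and are nondegenerate of index $0$ or $n$ by strict convexity. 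Your handle count (no $1$-handles, so a connected $\Sigma$ carries at most one minimum and one maximum, giving $\s ^n$, or $\r ^n$ with a top or bottom end) is correct and more elementary than invoking the soul theorem.

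The one genuine gap is the case you set aside: $h$ with no critical points, i.e.\ $\Sigma\cong L^{n-1}\times\r$ with two ends. You assert this is ruled out by ``a separate argument comparing the vertical behavior of $\Sigma$ against strict local convexity,'' but no such argument is supplied, and the obvious quantitative attempt (the angle function $\nu=\meta{N}{\partial_t}$ is strictly decreasing along the gradient flow of $h$ at rate $II(T,T)/\abs{T}^2$, so the flow should be driven to $\nu=-1$, i.e.\ to a critical point) fails without a uniform positive lower bound on the principal curvatures, which ``locally strictly convex'' does not provide. This case must genuinely be excluded, because without a lowest or highest point your slicing argument has nowhere to start. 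The cleanest repair is the paper's own: by the Gauss equation $K_\Sigma>0$, and a complete noncompact manifold of positive sectional curvature is diffeomorphic to $\r ^n$ (Gromoll--Meyer, or the soul theorem together with Perelman's result), hence has one end. With that single fact imported, your Morse analysis identifies that end as a top or bottom end and the rest of your proof goes through.
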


First we define a top or bottom end. Let $\man ^n \times \r $ be a product space and
$\Sigma$ a hypersurface in $\man ^n \times \r$. Let $\pi _{\rt} : \man ^n \times \r
\to \r$ be the usual projection. We denote by $h: \Sigma \to \r $ the {\it height
function}, that is, $h:= \left(\pi _{\rt}\right)_{| \Sigma}$.

\begin{definition}\label{Def:verticalend}
Let $\Sigma \subset \man ^n \times \r$ be a complete hypersurface. We say that
$\Sigma $ has a top end $E$ (resp. bottom end) if for any divergent sequence
$\set{p_n} \subset E$ the height function goes to $+\infty$ (resp. $-\infty$).
\end{definition}

{\it Proof of Theorem \ref{Theo:HScompact}.} Since $\Sigma$ is locally strictly
convex, the Gauss equation says that all the sectional curvatures of $\Sigma$ at any
point are positive. Thus, from Perelman's Soul Theorem \cite{P}, $\Sigma$ is either
compact or homeomorphic to $\r ^n$. In the latter case, $\Sigma$ has one topological
end $E$. $\man ^n$ is compact and $\Sigma$ is properly immersed so $E$ must go up or
down, otherwise $\Sigma \cap (\man ^n \times \set{0})$ would not be compact; so $E$
is a top or bottom end.

In a product space the leaves $\man ^n \times \set{t}$ are totally geodesic, hence
each connected component of $\Sigma \cap (\man ^n \times \set{t})$ is compact and
strictly convex when the intersection is transverse. Now, consider the foliation by
horizontal {\it hyperplanes} given by $P(t) := \man ^n \times \set{t}$ for $t \in \r
$. Since $\Sigma$ is either compact or has a top or bottom end, up to an isometry,
we can assume that $\Sigma \subset \man ^n \times [0 , + \infty)$ and $P(0)$ is the
horizontal hyperplane with the first contact point with $\Sigma$. At this point,
since $\Sigma$ is strictly convex, $\Sigma$ lies on one side of $P(0)$ and it is
(locally) a graph over a domain of $\man ^n$. Thus, there is $\epsilon >0$ so that
the hypersurfaces $C(t):=P(t) \cap U$ are embedded strictly convex hypersurfaces in
$\man ^n$ for $0<t<\epsilon$, where $U$ is the neighborhood of $\Sigma$ containing
the first contact point that can be expressed as a graph. Perhaps, $P(t)\cap \Sigma$
has other components distinct from $C(t)$ for $0<t<\epsilon$, but we only care how
$C(t)$ varies as $t$ increases. We also denote by $C(t)$ the continuous variation of
the submanifolds $P(t)\cap \Sigma$ when $t > \epsilon$.

Thus, it is easy to see that $C(t) $ either remains compact (non-empty) and embedded
for all $t>0$, or there exists $\bar t$ such that $C(t)$ are compact for all $0<t <
\bar t$, the component $C(t)$ disappears for $t > \bar t$ and $C(\bar t)$ is a
point. $C(t)$ remains embedded by Lemma \ref{Lem:emb}.

Thus, $\Sigma$ is either a properly embedded Euclidean $n-$space with a top end or
$\Sigma$ is an embedded $n-$sphere. \vspace{-0.85cm}
\begin{flushright}
$\square$
\end{flushright}

\begin{remark}\label{Rem:immersed}
Actually, the $1/4-$pinched assumption is necessary. consider the surface $\man ^2 =
C(l) \cup S_1 \cup S_2$ given in Remark \ref{Rem:14pinched}, with $l$ large enough
so that a family of concentric geodesic circles $S(r)$, $0< r< r_0$, in $\r ^2$
become immersed when we put them on the cylinder. That is, $S(r) \to {\rm point}$ as
$r\to 0$ and $S(r_0)$ is immersed and strictly convex in $C(l)$. Consider the
product space $\man ^2 \times \r$ and let $\Sigma := \bigcup _{0 \leq t \leq r_0}
(S(t) ,t) \cup \bigcup _{r_0 \leq t \leq 2r_0 } (S(2r_0 - t),t)$. Then, $\Sigma$ is
a strictly convex immersed surface in $\man ^2 \times \r$.
\end{remark}

Moreover, for strictly convex surfaces, from Bonnet and Gauss-Bonnet theorems, we
get

\begin{corollary}\label{Cor:Kcteproduct}
Let $\Sigma $ be a complete connected surface immersed in $\man ^2  \times \r$ with
extrinsic curvature bounded below by a positive constant, where $\man ^2$ is a
$1/4-$pinched surface. Then $\Sigma $ is an embedded sphere.
\end{corollary}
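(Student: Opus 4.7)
The plan is to reduce the statement to Theorem \ref{Theo:HScompact} by proving that such a $\Sigma$ must be compact. Since the extrinsic curvature $K_e = k_1 k_2$ is bounded below by a positive constant $c > 0$, the two principal curvatures share the same sign everywhere; choosing the unit normal accordingly, $\Sigma$ becomes locally strictly convex, so it satisfies the hypotheses of Theorem \ref{Theo:HScompact} as soon as properness is verified.

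To establish compactness, I invoke the Gauss equation for a surface in a $3$-manifold,
\[
K_{\Sigma}(p) \;=\; K_e(p) \;+\; \overline{K}\bigl(T_p\Sigma\bigr),
\]
where $\overline{K}(T_p\Sigma)$ denotes the sectional curvature of the ambient space $\man^2 \times \r$ along the tangent plane of $\Sigma$ at $p$. Using the product structure, together with the fact that the $\r$ factor contributes no curvature, a direct computation yields
\[
\overline{K}\bigl(T_p\Sigma\bigr) \;=\; K_{\man^2}\bigl(\pi_{\man^2}(p)\bigr)\,\bigl\langle N,\partial_t\bigr\rangle^{2},
\]
where $N$ is a unit normal to $\Sigma$. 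Since $\man^2$ is $1/4$-pinched, $K_{\man^2} \geq \kappa^- > 0$, hence the ambient contribution is nonnegative and $K_{\Sigma} \geq c > 0$ uniformly on $\Sigma$.

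By Bonnet's theorem, a complete surface whose Gauss curvature is bounded below by a positive constant has finite diameter, so $\Sigma$ is compact. In particular $\Sigma$ is properly immersed, and Theorem \ref{Theo:HScompact} applies: $\Sigma$ is properly embedded and homeomorphic either to $\s^2$ or to $\r^2$. Compactness excludes the second possibility, so $\Sigma$ is an embedded sphere.

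The only point requiring any care is the nonnegativity of the ambient sectional curvatures along $T_p\Sigma$, which is a direct consequence of the product structure and the positivity of $K_{\man^2}$. Once this is in hand, Bonnet delivers compactness and Theorem \ref{Theo:HScompact} delivers the conclusion; Gauss--Bonnet is not strictly needed, but provides an alternative way to pin down the sphere topology, since on a compact surface with $K_{\Sigma} > 0$ it forces $\chi(\Sigma) = 2$.
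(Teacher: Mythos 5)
Your proof is correct and follows exactly the route the paper intends: the paper derives the corollary "from Bonnet and Gauss--Bonnet theorems," i.e.\ the Gauss equation $K_\Sigma = K_e + K_{\man^2}\,\nu^2 \geq K_e \geq c > 0$ gives compactness via Bonnet, and Theorem \ref{Theo:HScompact} (or Gauss--Bonnet) then yields the embedded sphere. You have simply written out the details the paper leaves implicit; no gaps.
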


Anther application of Theorem \ref{Theo:HScompact} is the following

\begin{theorem}\label{Theo:HSTorus}
Let $\Sigma \subset \man ^n \times \s ^1$ be a complete immersed hypersurface whose
principal curvatures are greater than $c$ at any point of $\Sigma$. Assume also that
$\man ^n$ is a $1/4-$pinched sphere, where $\kappa ^- $ and $\kappa ^+$ denote the
minimum and maximum of the sectional curvatures of $\man ^n$ respectively. We
normalize so that $\kappa ^+ =1$. If $c > 2 $ , then $\Sigma$ is an embedded sphere.
\end{theorem}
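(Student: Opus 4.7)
The plan is to lift $\Sigma$ to the universal cover $\man^n\times\r$ of $\man^n\times\s^1$ (here $\man^n$ is simply connected, being a sphere of dimension $n\geq 2$), apply Theorem \ref{Theo:HScompact} to the lift, and use a Bonnet--Myers diameter bound to show both that the lift exists and that it projects injectively back onto $\Sigma$. I take $\s^1$ with its standard metric of length $2\pi$.

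First I control the intrinsic geometry of $\Sigma$ via the Gauss equation. The ambient space $\man^n\times\s^1$ is a product of non-negatively curved factors, hence has non-negative sectional curvature. By Gauss, for any orthonormal pair of principal directions $e_i,e_j\in T\Sigma$,
$$K^\Sigma(e_i,e_j)=\overline K(e_i,e_j)+k_ik_j\geq c^{2},$$
and more generally the extrinsic contribution to the Ricci tensor is the operator $HA-A^{2}$, with eigenvalues $k_{i}(H-k_{i})=k_{i}\sum_{j\neq i}k_{j}>(n-1)c^{2}$; since the ambient contribution is non-negative, ${\rm Ric}^\Sigma>(n-1)c^{2}$. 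Because $\Sigma$ is complete, Bonnet--Myers gives that $\Sigma$ is compact with
$${\rm diam}(\Sigma)\leq \pi/c<\pi/2.$$

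Second, this diameter bound lets me lift $\Sigma$. The composition $h:\Sigma\to\s^1$ of the immersion with the projection onto the $\s^1$-factor is $1$-Lipschitz, so $h(\Sigma)$ is a connected subset of $\s^1$ with $\s^1$-diameter $<\pi/2$, hence contained in an arc of length $<\pi/2$; in particular $h$ is null-homotopic, so the inclusion $\Sigma\hookrightarrow\man^n\times\s^1$ lifts through the covering map $p:\man^n\times\r\to\man^n\times\s^1$ to an isometric immersion $\tilde\iota:\Sigma\to\man^n\times\r$. The image $\tilde\Sigma:=\tilde\iota(\Sigma)$ is a compact (hence properly immersed) locally strictly convex connected hypersurface, so Theorem \ref{Theo:HScompact} applies and forces $\tilde\Sigma$ to be an embedded $n$-sphere (the Euclidean $n$-space alternative is ruled out by compactness).

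Finally, since the $\r$-coordinate on $\man^n\times\r$ is $1$-Lipschitz, its restriction to $\tilde\Sigma$ is $1$-Lipschitz with respect to the intrinsic metric, and the $\r$-extent of $\tilde\Sigma$ is at most ${\rm diam}(\tilde\Sigma)={\rm diam}(\Sigma)<\pi/2<2\pi$. Because the deck group acts on $\man^n\times\r$ by integer multiples of $2\pi$-translation in the $\r$-direction, $p|_{\tilde\Sigma}$ is injective, and $\Sigma=p(\tilde\Sigma)$ is the required embedded $n$-sphere. The main technical point is the quantitative balance between the diameter bound and the deck translation length: the hypothesis $c>2$ (with $\kappa^{+}=1$ and ${\rm length}(\s^1)=2\pi$) provides enough room both for the lift to exist and for it to embed back down.
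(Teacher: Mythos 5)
Your outline coincides with the paper's: compactness of $\Sigma$ from the curvature hypothesis, a lift to $\man ^n \times \r$, an application of Theorem \ref{Theo:HScompact} to conclude the lift $\tilde\Sigma$ is an embedded sphere, and finally injectivity of the covering projection restricted to $\tilde\Sigma$. Your justification of the existence of the lift is in fact more careful than the paper's (which simply asserts it); note, though, that the cleanest route is that $\pi_1(\Sigma)$ is finite by Bonnet--Myers, so $h_*:\pi_1(\Sigma)\to\mathbb{Z}$ is automatically trivial, and this works for any fiber length. The genuine problem is your last step, and it stems from your normalization of $\s^1$. The statement does not fix the fiber length, but the constant $c>2$ (together with $\kappa^+=1$) is calibrated for fibers of length $1$: that is the normalization the paper's proof uses (``their distance along the fiber has to be an integer multiple of $1$''). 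With length-$1$ fibers your argument breaks down: Bonnet--Myers gives ${\rm diam}(\tilde\Sigma)\leq \pi/c < \pi/2 \approx 1.57$, which is \emph{not} smaller than the deck translation length $1$, so two points of $\tilde\Sigma$ on the same fiber could a priori be identified by the covering map; likewise the ``image of $h$ lies in a short arc'' argument gives nothing, since $\pi/2$ exceeds the diameter $1/2$ of the fiber. With your length-$2\pi$ fiber the proof is correct, but it would then go through for any $c>1$, a strong sign that this is not the intended statement.

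The missing idea is a bound on the \emph{vertical} extent of $\tilde\Sigma$ sharper than the intrinsic diameter. The paper first shows $\tilde\Sigma \subset D \times [0,+\infty)$ for a geodesic disk $D \subset \man^n$ of radius $<\pi/2$ (using ${\rm diam}(\tilde\Sigma)<\pi/c$ and $c>2$), and then slices $\tilde\Sigma$ by the vertical planes $P=\gamma\times\r$ over geodesics $\gamma$ of $D$. Such a $P$ is totally geodesic and flat; strict convexity of the embedded sphere $\tilde\Sigma$ forces $\tilde\Sigma\cap P$ to be a \emph{single} convex Jordan curve (a straight segment of $P$ joining two components would contradict convexity), and its geodesic curvature exceeds $c$ because $P$ is totally geodesic. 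A closed convex plane curve of curvature $>c$ lies in a disk of radius $<1/c<1/2$, so two points of $\tilde\Sigma$ on a common fiber are at distance $<1$ and cannot differ by a nontrivial deck translation. This replaces your bound $\pi/c$ on the vertical extent by $2/c$, and $2/c<1$ is exactly the hypothesis $c>2$. To repair your proof for the intended normalization you would need to incorporate this slicing argument (or an equivalent quantitative improvement).
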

\begin{proof}
First, since $\Sigma$ is complete and its principal curvatures are greater than a
positive constant, note that $\Sigma$ is compact by Bonnet's Theorem.

Now, lift $\Sigma$ to a compact hypersurface $\tilde \Sigma$ in the universal
covering space of $\man ^n \times \s ^1$, i.e. $\tilde \Sigma \subset \man ^n \times
\r$ is a compact hypersurface whose principal curvatures are greater than a positive
constant. Thus, from Theorem \ref{Theo:HScompact}, $\tilde \Sigma $ is an embedded
sphere in $\man ^n \times \r $.

Therefore, we can assume, up to an isometry, that $\tilde \Sigma \subset \man ^n
\times [0,+\infty)$ and $\man ^n \times \set{0}$ has a first contact point $p \in
\tilde \Sigma \cap \man ^n \times \set{0}$. Actually, $p$ is a global minimum.

Let $D$ be the geodesic disk in $\man ^n$ centered at $p$ of radius $ r:=\frac{\pi
}{ 2\sqrt{\kappa ^+}}-\epsilon$, $\epsilon >0$ small enough to be chosen. Note that
$D$ is (topologically) a $n-$ball and $S:=\partial D$ is strictly convex in $\man
^n$ with respect to the inward orientation. We claim that $\tilde \Sigma \subset D
\times [0,+\infty) $.

Set $C(t):= \tilde \Sigma \cap \left(\man ^n \times \set{t}\right)$, $t>0$. Then,
$C(t)$ is an embedded strictly convex $n-$sphere for $0< t < t_0$. For $t$ close to
$0$, $C(t)$ is contained in $D$. Assume there exists $\bar t \in (0 , t_0)$ so that
$C(\bar t) \cap S \not = \emptyset$. Set $q \in C(\bar t) \cap S$, then $d(p,q)\geq
r $, where $d(p,q)$ denotes the distance in $\tilde \Sigma$.

Now, from the Gauss equation, the sectional curvatures $\tilde K_s$ of $\tilde
\Sigma$ are bounded below by $\tilde K_s > c^2 $. So, the Bonnet Theorem bounds the
diameter of $\tilde \Sigma $ from above as
$$ {\rm diam }(\tilde \Sigma) < \pi / c.$$

Thus,
$$ \frac{\pi }{ 2\sqrt{\kappa ^+}} - \epsilon = r \leq d(p,q) \leq {\rm diam }(\tilde \Sigma) <
\pi / c,$$but, since $\kappa ^+ =1 $ and $c >2$, we can choose $\epsilon $ small
enough so that it contradicts the above inequality. Thus, $\tilde \Sigma \subset D
\times [0 ,+\infty)$.

Since $\tilde \Sigma \subset D \times \r $, we claim:

\begin{quote}
{\bf Claim 1:} {\it For any geodesic $\gamma \subset D$ joining two points in the
boundary $q_0,q_1 \in S$, if the {\it geodesic plane} (note that is not complete)
$P:= \gamma \times \r $ and $\tilde \Sigma $ intersect transversally,  then $\alpha
:= \tilde \Sigma \cap P$ is a strictly convex embedded Jordan curve in $P$.
Moreover, $\alpha $ has geodesic curvature greater than $c$.}
\end{quote}
{\it Proof of Claim 1:} Assume $P \cap \tilde \Sigma$ has two components (or more).
Let $C_1$ and $C_2$ denote such components. Since $\tilde \Sigma$ is an embedded
sphere, $C_i$, $i=1,2$, is a strictly convex embedded Jordan curve in $P$. Let $p_1
\in \Omega _1$ and $p_2 \in \Omega _2$ be points in the convex domains determined by
$C_1$ and $C_2$ in $P$ respectively. Let $\beta \subset \man ^n \times \r$ be the
geodesic joining $p_1$ and $p_2$, that is, $\beta$ is nothing but the {\it straight
line} in $P$ joining $p_1$ and $p_2$ (recall they are in the same vertical). Thus,
$\beta$ intersects $C_1$ and $C_2$ (note that $P$ is totally geodesic and flat in
$\man ^n \times \r$), which is a contradiction since $\tilde \Sigma$ is a strictly
convex embedded $n-$sphere.

Now, $\alpha $ has geodesic curvature greater than $c$, since the principal
curvatures of $\tilde \Sigma$ are greater than $c$ and $P$ is totally geodesic. This
proves Claim 1.
\begin{flushright}
$\square$
\end{flushright}

Now, we claim that $\Pi (\tilde \Sigma ) = \Sigma$ is embedded, here $\Pi : \man ^n
\times \r \to \man ^n \times \s^1 $ is the covering map. Assume $\Sigma$ is not
embedded, then, there exist two distinct points $p, q \in \tilde \Sigma$ that
project to the same point downstairs. Also, $p$ and $q$ are contained in the same
fiber in $\man ^n \times \r $ and their distance (along the fiber) has to be an
integer multiple of $1$. Now, let $\gamma $ be a geodesic in $D$ passing through
$\tilde p = \tilde q$, where $\tilde p$ and $\tilde q$ are the projections of $p$
and $q$ into $\man ^n$ respectively, so that $P := \gamma \times \r$ meets
transversally to $\tilde \Sigma$. Such a geodesic clearly exists.

Let $\alpha := \tilde \Sigma \cap P$ be the intersection curve, which is a simple
Jordan curve in $P$ with geodesic curvature greater than $c>2$ from Claim 1. So,
since $P$ is isometrically $\r ^2$, $\alpha $ is contained in a circle of radius
strictly less than $1/2$ in $P$. But, note that $p , q \in \alpha$ and the distance
from $p$ to $q$ is (at least) one. This is a contradiction. Therefore, $\Sigma$ is
embedded. This proves the result.
\end{proof}

Also, by using Theorem \ref{Theo:HScompact}, one can give an alternative, and more
geometric, proof of \cite[Theorem 7.3]{EGR} when $\man ^2 = \s^2$.

\begin{theorem}\label{Theo:SxR}
Let $\Sigma \subset \s ^2  \times \r$ be a complete connected surface with constant
positive extrinsic curvature. Then $\Sigma $ is a rotational sphere in $\s^2 \times
\r$.
\end{theorem}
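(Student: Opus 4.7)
The plan is first to reduce $\Sigma$ to a convex embedded sphere via Corollary \ref{Cor:Kcteproduct}, and then to apply the Alexandrov moving-plane method twice: once with the horizontal totally geodesic foliation of $\s^2 \times \r$ to extract a horizontal plane of symmetry, and once with vertical totally geodesic planes $\gamma \times \r$ (where $\gamma \subset \s^2$ is a great circle) to promote this to full rotational symmetry. For the reduction, the Gauss equation together with the fact that the sectional curvatures of $\s^2 \times \r$ lie in $[0,1]$ gives $K_\Sigma = c + K_{\s^2 \times \r}(T_p \Sigma) \geq c > 0$, so $\Sigma$ is compact by Bonnet's theorem; since $\s^2$ is trivially $1/4-$pinched, Corollary \ref{Cor:Kcteproduct} then yields that $\Sigma$ is an embedded strictly convex topological sphere.

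For the horizontal symmetry, I would foliate $\s^2 \times \r$ by the totally geodesic leaves $P_t := \s^2 \times \{t\}$ and run the classical moving-plane argument. If $t_+$ denotes the maximum height on $\Sigma$, strict convexity makes $\Sigma$ a graph near its top point, so for $t$ slightly below $t_+$ the reflection across $P_t$ of the cap above $P_t$ lies strictly inside the convex body bounded by $\Sigma$. Decreasing $t$ to the first contact value $t_0$ and applying the interior or boundary maximum principle for the Monge--Amp\`ere equation $K_{\mathrm{ext}} = c$ (elliptic on strictly convex graphs) to the reflected cap and the corresponding lower piece of $\Sigma$, the two surfaces must coincide near the contact point; unique continuation then propagates this to global invariance of $\Sigma$ under reflection across $P_{t_0}$.

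For the rotational symmetry, the equator $\Gamma := \Sigma \cap P_{t_0}$ is a strictly convex Jordan curve in $\s^2$ bounding a geodesically convex disk $D$, necessarily contained in an open hemisphere; moreover a continuity argument along the lines of the proof of Theorem \ref{Theo:HScompact} shows that the horizontal slices are nested so that $\pi(\Sigma) = D$. For each $q$ in the non-empty open set $\s^2 \setminus (D \cup -D)$ the vertical axis $\{q\} \times \r$ and its antipode $\{q^*\} \times \r$ are disjoint from $\Sigma$, and the pencil of vertical totally geodesic planes $\gamma_\theta \times \r$ through $q$ and $q^*$ covers $\s^2 \times \r$ outside these two axes. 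Starting from a value of $\theta$ with $\gamma_\theta$ disjoint from $D$ (so that $\Sigma$ lies entirely on one side of $\gamma_\theta \times \r$) and rotating until first contact, the moving-plane argument produces a vertical plane of symmetry through $\{q\} \times \r$. Varying $q$ continuously and composing the resulting reflections yields a closed one-parameter subgroup of isometries of $\s^2 \times \r$ preserving $\Sigma$, which can only be the $SO(2)$-group of rotations about a single vertical axis; hence $\Sigma$ is rotational. The principal obstacle is precisely this last step: since vertical planes do not form a global foliation of $\s^2 \times \r$, one cannot directly imitate the horizontal argument, and instead the moving-plane technique must be implemented within rotational pencils and the symmetry planes coming from different pencils must be combined to exhibit a common axis of rotation.
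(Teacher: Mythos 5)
Your reduction to an embedded strictly convex sphere and the horizontal Alexandrov reflection agree with the paper, which obtains embeddedness from Theorem \ref{Theo:HScompact} and then reflects in the slices $\s ^2\times\set{t}$ to make $\Sigma$ a bi-graph over some $\s ^2\times\set{t_0}$. The divergence is in the final step: the paper does not run vertical moving planes itself. It notes (via do Carmo--Warner) that the equator $\alpha=\Sigma\cap(\s ^2\times\set{t_0})$, being a closed strictly convex curve, lies in an open hemisphere $\d$, so $\Sigma\subset\d\times\r$, and then invokes \cite[Corollary 5.1]{CR}, which yields rotational symmetry for such spheres contained in a cylinder over a hemisphere. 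Your plan to carry out the vertical reflection directly, using the pencil of great circles through a point $q\notin\overline{D}\cup(-\overline{D})$, is workable; one small point you should verify is that some great circle through $q$ and $q^{*}$ misses $\overline{D}$ so the rotation can start, which does follow from convexity of $D$ and $q,q^{*}\notin\overline{D}$.

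The genuine gap is the sentence asserting that varying $q$ and composing the resulting reflections ``yields a closed one-parameter subgroup \dots\ which can only be the $SO(2)$-group of rotations about a single vertical axis.'' Each pencil produces one mirror circle $\gamma(q)$ through $q$ and $q^{*}$; the composition of the reflections attached to two base points $q_1,q_2$ is a rotation about the pair of points $\gamma(q_1)\cap\gamma(q_2)$, and as the base points vary these axes vary, so the set of compositions is not a one-parameter group unless you first prove that all the circles $\gamma(q)$ are concurrent. That concurrency is exactly what is needed and it is not free. It can be supplied: each reflection $R_{\gamma(q)}$ preserves $\pi(\Sigma)=\overline{D}$, a compact convex set in an open hemisphere, hence fixes its circumcenter $c$ (the center of the unique smallest enclosing geodesic disk, which is isometry-equivariant); since a reflection of $\s ^2$ fixes only the points of its mirror circle, every $\gamma(q)$ passes through $c$ and $-c$, and as $q$ ranges over an open set these circles sweep an interval of directions at $c$, so the closed symmetry group of $\Sigma$ contains all rotations about the vertical axis $\set{c}\times\r$. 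With that insertion your argument closes; without it, the last step is an assertion rather than a proof, which is presumably why the paper routes this step through \cite{CR} instead.
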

\begin{proof}
From Theorem \ref{Theo:HScompact}, $\Sigma$ is an embedded sphere. So, we can assume
that $\Sigma \subset \s^2 \times (0 ,+ \infty)$. Do Alexandrov reflection w.r.t.
$P(t)= \s ^2 \times \set{t}$, $t>0$. Then, since $\Sigma$ is an embedded sphere,
there exists $t_0 >0$ so that $\Sigma$ is a bi-graph over $\s ^2 \times \set{t_0}$.
Up to an isometry we can assume $\Sigma$ is a bi-graph over $\s ^2 \times \set{0}$.

Set $\alpha = \Sigma \cap \s^2 \times \set{0}$, this curve is a strictly convex
simple Jordan curve, so, $\alpha $ is contained in some open hemisphere $\d $ of $\s
^2$ (see \cite{CW}). Let $\Omega$ be the compact domain bounded by $\alpha$. Since
$\Sigma$ is a bi-graph over $\overline \Omega$ and $\alpha$ is contained in an open
hemisphere $\d $, $\Sigma$ is contained in $\d \times \r $. Thus, \cite[Corollary
5.1]{CR} implies that $\Sigma$ is a rotational sphere.
\end{proof}

\section{Hadamard-Killing submersions}\label{sect:HKSubmersion}

In \cite{ES}, the authors studied locally strictly surfaces immersed in a strict
Hadamard-Killing submersion. We begin this Section reviewing the basis properties of
a Hadamard-Killing submersion (see \cite{ES} for details).

\subsection{On basic properties}

Most of this part in contained in \cite{ES}, but we need to introduce some concepts
and properties in order to make this paper self contained.

First, we start with Hadamard surfaces. For more details on Hadamard manifolds with
non positive sectional curvature see \cite{E}.

Let $\m $ be a Hadamard surface, that is, $\m$ is a complete, simply connected
surface with Gaussian curvature $\kappa \leq 0$.

It is well known that given two points $p,q \in \m$, there exists a unique geodesic
$\gamma_{pq}$ joining $p$ and $q$. We say that two geodesics $\gamma, \beta$ in $\m$
are asymptotic if there exists a constant $C>0$ such that $d(\gamma (t), \beta (t))$
$\leq$ $C$ for all $t$ $>$ 0. To be asymptotic is an equivalence relation on the
oriented unit speed geodesics or on the set of unit vectors of $\m$. We will denote
by $\gamma(+\infty)$ and $\gamma(-\infty)$ the equivalence classes of the geodesics
$t \rightarrow \gamma(t)$ and $t \rightarrow \gamma(-t)$ respectively. Moreover, an
equivalence class is called a point at infinity. $\m(\infty)$ denotes the set of all
points at infinity for $\m$ and $\m _{*} = \m\cup\m(\infty)$.

The set $\m _{*} = \m\cup \m(\infty)$ admits a natural topology, called the cone
topology, which makes $\m _{*}$ homeomorphic to the closed $2-$disk in $\r ^{2}$.

When $\m$ is a Hadamard surface with sectional curvature bounded above by a negative
constant then any two asymptotic geodesics $\gamma, \beta$ satisfy that the distance
between the two curves $\gamma_{|[t, +\infty)},\beta_{|[t, +\infty)} $ is zero for
any $t \in \r$. For each point $p \in \m$  and $x \in \m(\infty)$, there is a unique
geodesic $\gamma_{px}$ with initial condition $\gamma _{px}(0)=p$ and it is in the
equivalence class of $x$. For each point $p \in \m$ we may identify $\m(\infty)$
with the circle $\mathbb{S}^{1}$ of unit vectors in $T_{p}\m$ by means of the
bijection
\begin{equation*}
\begin{matrix}
G_p : & \s ^1 \subset T_p \m & \to & \m (\infty) \\
 & v & \longmapsto & \lim _{t\to +\infty} \gamma _{p,v}(t)
\end{matrix}
\end{equation*}where $\gamma _{p,v}$ is the geodesic with initial conditions $\gamma _{p,v}(0)=p$
and $\gamma _{p,v}'(0)=v$. In addition the hypothesis on the sectional curvature (it
is bounded above by a negative constant) yields there is an unique geodesic joining
two points of $\m(\infty)$.

Given a set $\Omega \subseteq \m$, we denote by $\partial_{\infty}\Omega$ the set
$\partial \Omega \cap \m(\infty)$,where $\partial \Omega$ is the boundary of
$\Omega$ for the cone topology. We orient $\m$ so that its boundary at infinity is
oriented counter-clockwise.

Let $\alpha$ be a complete oriented geodesic in $\m$, then
\begin{equation*}
\partial_{\infty}\alpha = \{\alpha^{-}, \alpha^{+}\}
\end{equation*}
where $\alpha^{-} = \lim _{t \to -\infty}\hspace{0,2cm}\alpha(t)$ and $\alpha^{+} =
\lim _{t \to + \infty}\hspace{0,2cm}\alpha(t)$. Here $t$ is arc length along
$\alpha$. We identify $\alpha$ with its boundary at infinity, writing $\alpha =
\{\alpha^{-}, \alpha^{+}\}$.

\begin{definition}\label{Def:orientedgeod}
Let $\theta_{1}$ and $\theta_{2}$ $\in \m(\infty)$, we define the oriented geodesic
joining $\theta_{1}$ and $\theta_{2}$, $\alpha (\theta_{1},\theta_{2})$, as the
oriented geodesic from $\theta_{1} \in \m(\infty)$ to  $\theta_{2} \in \m(\infty)$.
\end{definition}

\begin{definition}\label{Def:interiordomain}
Let $\alpha$ a oriented complete geodesic in $\m$. Let $J$ be the standard
counter-clockwise rotation operator. We call exterior set of $\alpha$ in $\m$,
$ext_{\mathbb{M}^{2}}(\alpha)$, the connected component of $\m \setminus \alpha$
towards which $J\alpha'$ points. The other connected component of $\m \setminus
\alpha$ is called the interior set of $\alpha$ in $\m$ and denoted by
$int_{\mathbb{M}^{2}}(\alpha)$.
\end{definition}

We continue with Riemannian submersions. Let $\man$ be a $3-$dimensional Riemannian
manifold so that it is a Riemannian submersion $\pi : \man \to \m$ over a surface
$(\m , g)$ with Gauss curvature $\kappa$, and the {\it fibers}, i.e. the inverse
image of a point at $\m $ by $\pi$, are the trajectories of a unit Killing vector
field $\xi $, and hence geodesics. Denote by $\meta{}{}$, $\camb$, $\ext $, $\bar R$
and $[,]$ the metric, Levi-Civita connection, exterior product, Riemann curvature
tensor and Lie bracket in $\man$, respectively. Moreover, associated to $\xi$, we
consider the operator $J: \campo (\man) \to \campo (\man)$ given by
\begin{equation*}
J X : = X \ext \xi , \, \, \, X \in \campo (\man).
\end{equation*}

Given $X \in \campo (\man)$, $X$ is {\it vertical} if it is always tangent to
fibers, and {\it horizontal} if always orthogonal to fibers. Moreover, if $X \in
\campo (\man) $, we denote by $X^v$ and $X^h$ the projections onto the subspaces of
vertical and horizontal vectors respectively.

One can see that, under these conditions, (see \cite[Proposition 2.6]{ES}) there
exists a function $\tau : \man \to \r $ so that
\begin{equation}\label{eq:tau}
\camb _X \xi = \tau \, X \ext \xi , \,
\end{equation}and then, it is natural to introduce the following definition:

\begin{definition}
A Riemannian submersion over a Hadamard surface $\m $, i.e., the Gaussian curvature
$\kappa$ of $\m$ is non-positive, whose fibers are the trajectories of a unit
Killing vector field $\xi$ will be called a {\it Hadamard-Killing submersion} and
denoted by $\hm $, where $\kappa $ is the Gauss curvature of $\m $ and $\tau $ is
given by \eqref{eq:tau}.
\end{definition}

Let $\Sigma \subset \hm$ be an oriented immersed connected surface. We endow
$\Sigma$ with the induced metric (\emph{First Fundamental Form}),
$\meta{}{}_{|\Sigma}$, in $\hm$, which we still denote by $\meta{}{}$. Denote by
$\nabla $ and $R$ the Levi-Civita connection and the Riemann curvature tensor of
$\Sigma$ respectively, and $S$ the shape operator, i.e., $S X = - \nabla _X N$ for
all $X \in \campo (\Sigma)$ where $N$ is the unit normal vector field along the
surface. Then $II(X,Y) = \meta{SX }{Y}$ is the \emph{Second Fundamental Form} of
$\Sigma$. Moreover, we denote by $J$ the (oriented) rotation of angle $\pi /2$ on
$T\Sigma$.

Set $\nu = \meta{N}{\xi}$ and $T = \xi - \nu N$, i.e., $\nu $ is the normal
component of the vertical field $\xi$, called the \emph{angle function}, and $T$ is
the tangent component of the vertical field.

In order to establish our result, we shall introduce some definitions and properties
about some particular surfaces in $\man (\kappa , \tau)$.

\begin{definition}\label{Def:cylinder}
We say that $\Sigma \subset \hm$ is a vertical cylinder over $\alpha$ if \,$\Sigma
:= \pi ^{-1} (\alpha)$, where $\alpha $ is a curve on $(\m  ,g)$. If $\alpha $ is a
geodesic, $\Sigma := \pi ^{-1}(\alpha)$ is called a vertical plane.
\end{definition}

One can check that a vertical plane is minimal, isometric to $\r ^2$ and its
principal curvature are bounded, in absolute value, by $|\tau (p)|$ at any point $p
\in \Sigma$ (see \cite[Proposition 2.10]{ES}).

We introduce a definition analogous to that given for complete geodesics in a
Hadamard surface since the notions of interior and exterior domains of a horizontal
oriented geodesic extend naturally to vertical planes.

\begin{definition}\label{Def:Interiorplane}
Let $\hm$ be a Hadamard-Killing submersion. For a complete oriented geodesic
$\alpha$ in $\m$ we call, respectively, interior and exterior of the vertical plane
$P = \pi^{-1}(\alpha)$ the sets
\begin{equation}
int_{\mathcal{M}(\kappa,\tau)}(P)=\pi^{-1}(int_{\mathbb{M}^{2}}(\alpha)),
\hspace{1cm}
ext_{\mathcal{M}(\kappa,\tau)}(P)=\pi^{-1}(ext_{\mathbb{M}^{2}}(\alpha)) \nonumber
\end{equation}
\end{definition}

Moreover, we will often use foliations by vertical planes of $\hm$. We now make this
precise.

\begin{definition}\label{Def:folitaionplanes}
Let $\hm$ be a Hadamard-Killing submersion. Let $P$ be a vertical plane in $\hm$,
and let $\beta(t)$ be an oriented horizontal geodesic in $\m $, with $t$ arc length
along $\beta$, $\beta(0)=p_{0} \in P$, $\beta'(0)$ orthogonal to $P$ at $p_{0}$ and
$\beta(t) \in ext_{\mathcal{M}(\kappa,\tau)}(P)$ for $t> 0$. We define the oriented
foliation of vertical planes along $\beta$, denoted by $P_{\beta(t)}$, to be the
vertical planes orthogonal to $\beta(t)$ with $P=P_{\beta}(0)$.
\end{definition}

To finish, we will give the definition of a particular type of curve in a vertical
plane. To do so, we recall a few concepts about Killing graphs in a Killing
submersion (see \cite{RST}).

Under the assumption that the fibers are complete geodesics of infinite length, it
can be shown (see \cite{St}) that such a fibration is topologically trivial.
Moreover, there always exists a global section
$$ s : \m \to \hm ,$$so, considering the flow $\phi _t$ of $\xi$, a trivialization
of the fibration is given by the diffeomorphism
$$ \begin{matrix}
\mr & \to & \hm \\
(p,t)& \rightarrowtail & \phi _t (s(p))
\end{matrix} $$

\begin{definition}\label{Def:Killinggraph}
Let $\pi : \hm \to \m $ be a Killing submersion. Let $\Omega \subset \m $ be a
domain. A Killing graph over $\Omega$ is a surface $\Sigma \subset \hm $ which is
the image of a section $s : \overline \Omega \to \hm $, with $s \in C^2(\Omega) \cap
C^0 (\overline \Omega)$. We may also consider graphs, $\Sigma \subset \hm$, without
boundary.
\end{definition}

Finally, we define:

\begin{definition}\label{Def:veticalcurve}
Let $P$ be a vertical plane in $\hm$ and $\gamma$ a complete embedded convex curve
in $P$. We say that $\alpha$ is an untilted curve in $P$ if there exists a point
$p\in \alpha$ so that $\phi _t (p)$ is contained in the convex body bounded by
$\alpha $ in $P$ for all $t>0$ (or $t<0$). Otherwise, we say that $\alpha $ is
tilted.
\end{definition}

\subsection{The result}

First, note that if $\Sigma \subset \hm $ is an immersed surface with positive
extrinsic curvature, then we can choose a globally defined unit normal vector field
$N$ so that the principal curvatures, i.e., the eigenvalues of the shape operator,
are positive. We denote them by $k_i$ for $i=1,2$.

We start with the following elementary result (see \cite[Proposition 3.1]{ES}).

\begin{proposition}\label{Pro:convex}
Let $\Sigma \subset \hm $ be an immersed surface whose principal curvatures satisfy
$k_{i}(p)> |\tau (p)|$ for all $p\in \Sigma$. Let $P$ be a vertical plane. If
$\Sigma$ and $P$ intersect transversally then each connected component $C$ of
$\Sigma \cap P$ is a strictly convex curve in $P$.
\end{proposition}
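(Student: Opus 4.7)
The plan is to express the geodesic curvature of $C$ in the flat plane $P$ as a combination of the second fundamental forms of $\Sigma$ and $P$ evaluated on the common unit tangent direction of $C$, and then to exploit the pinching $k_i>|\tau|$ together with the fact (recalled right after Definition \ref{Def:cylinder}) that the principal curvatures of any vertical plane are bounded in absolute value by $|\tau|$.

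Let $c$ be a unit-speed parametrization of a component $C$, set $T=c'$, and let $N_{\Sigma}$ and $N_P$ be the unit normals to $\Sigma$ and $P$. Both lie in the $2$-plane $T^{\perp}\subset T\man$, and by transversality they span it, making a well-defined angle $\theta\in(0,\pi)$. Let $n_{\Sigma}$ and $n_P$ be unit vectors such that $\{T,n_{\Sigma}\}$ and $\{T,n_P\}$ are oriented orthonormal bases of $T\Sigma$ and $TP$ respectively. Then
$$ N_P = \cos\theta\,N_{\Sigma} + \sin\theta\,n_{\Sigma}, \qquad n_P = -\sin\theta\,N_{\Sigma} + \cos\theta\,n_{\Sigma}.$$
Because $|T|=1$, the acceleration $\camb_T T$ lies in $T^{\perp}$. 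Decomposing $\camb_T T$ in each of the two orthonormal bases and matching coefficients yields, after a short linear-algebra calculation,
$$ \kappa_P(c)\,\sin\theta \;=\; II_P(T,T)\cos\theta - II_{\Sigma}(T,T),$$
where $\kappa_P(c)$ denotes the geodesic curvature of $c$ in $P$ and $II_{\Sigma}$, $II_P$ are the scalar second fundamental forms along $T$.

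The remaining step is a direct estimate. Since $T$ is a unit tangent vector to $\Sigma$, the pinching $k_i>|\tau|$ gives $II_{\Sigma}(T,T)\geq\min\{k_1,k_2\}>|\tau|$ at every point of $C$, while the bound on the principal curvatures of $P$ gives $|II_P(T,T)|\leq|\tau|$. Therefore
$$ |\kappa_P(c)\,\sin\theta| \;\geq\; II_{\Sigma}(T,T) - |II_P(T,T)| \;>\; 0,$$
and $\sin\theta\neq 0$ by transversality, so $\kappa_P(c)$ is nowhere zero along $C$. By continuity its sign is constant along $C$; reversing the orientation of $n_P$ if necessary, $C$ has strictly positive geodesic curvature in the flat plane $P$, which is exactly the statement that $C$ is a strictly convex curve in $P$.

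The main obstacle is really just the bookkeeping in the first step: getting the identity $\kappa_P\sin\theta=II_P\cos\theta-II_{\Sigma}$ with the correct signs from the two decompositions of $\camb_T T$. Once that is in place, the inequality $k_i>|\tau|$ was tailored exactly to dominate the contribution of $II_P(T,T)$ on the right-hand side, so the conclusion follows immediately.
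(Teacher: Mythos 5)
Your proof is correct, and the identity $\kappa_P(c)\sin\theta = II_P(T,T)\cos\theta - II_\Sigma(T,T)$ obtained by decomposing $\camb_T T$ in the two orthonormal frames is derived with the right signs; combined with $II_\Sigma(T,T)\geq\min\{k_1,k_2\}>|\tau|\geq|II_P(T,T)|$ and $\sin\theta\neq 0$ from transversality, it gives nowhere-vanishing geodesic curvature of $C$ in the flat plane $P$. The paper itself does not prove this proposition but cites it from \cite[Proposition 3.1]{ES}, and your argument is exactly the standard comparison of normal curvatures along the intersection curve used there, so nothing further is needed.
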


Now, we have the necessary tools for establishing our Theorem.

\begin{theorem}\label{Theo:HSzero}
Let $ \Sigma \subset \hm$ be a complete connected immersed surface so that $k_{i}(p)
> |\tau (p)|$ for all $p\in \Sigma$, where $\hm $ is a Hadamard-Killing submersion.
Then $\Sigma$ is properly embedded. Moreover, $\Sigma$ is homeomorphic to
$\mathbb{S}^{2}$ or to $\mathbb{R}^{2}$. In the later case, when $\Sigma$ has no
point $p$ at which $N(p)$ is horizontal, $\Sigma$ is a Killing graph over a convex
domain of $\m$.
\end{theorem}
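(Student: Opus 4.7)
My plan is to adapt the strategy of \cite{ES} (which handled the strict Hadamard case, where the base Gauss curvature is bounded above by a negative constant) to the present non-strict Hadamard--Killing setting. For the topology, the pointwise inequality $k_{i}>|\tau|\geq 0$ gives $\Sigma$ positive extrinsic curvature; combined with Bonnet's theorem applied along principal directions, a soul-theorem type reasoning analogous to that in the proof of Theorem \ref{Theo:HScompact} yields that $\Sigma$ is either compact and homeomorphic to $\mathbb{S}^{2}$, or properly immersed and homeomorphic to $\mathbb{R}^{2}$ with a single end.

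For embeddedness I would use Alexandrov-style moving vertical planes. Fix an arbitrary oriented horizontal geodesic $\beta$ in $\m$ and consider the associated foliation $\{P_{\beta(t)}\}$ by vertical planes (Definition \ref{Def:folitaionplanes}). Proposition \ref{Pro:convex} guarantees that every transverse component of $\Sigma\cap P_{\beta(t)}$ is a strictly convex curve in the flat totally geodesic plane $P_{\beta(t)}$. Starting from $t\ll 0$ where $P_{\beta(t)}\cap \Sigma=\emptyset$ by properness, and letting $t_{0}$ be the first contact time, the intersection is a simple closed strictly convex curve bounding a small convex cap of $\Sigma$ for $t$ just past $t_{0}$. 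I would then run a continuation argument modelled on Lemma \ref{Lem:emb}: if embeddedness failed at a first time $t_{1}$, two points of $\Sigma$ would coincide on a pair of strictly convex curves inside the flat totally geodesic plane $P_{\beta(t_{1})}$, which is incompatible with the maximum principle for strictly convex curves in a Euclidean plane. Varying $\beta$ over all horizontal geodesics of $\m$ produces a global embedding.

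For the Killing graph conclusion, assume $\Sigma\simeq\mathbb{R}^{2}$ and $\nu=\meta{N}{\xi}$ never vanishes. Then $\xi$ is everywhere transverse to $\Sigma$, so $\pi|_{\Sigma}$ is a local diffeomorphism onto an open set of $\m$. Global embeddedness together with the convexity of all transverse intersections with vertical planes forces $\pi|_{\Sigma}$ to be injective: two distinct preimages of the same base point would lie on a common simple strictly convex curve in the vertical plane through that fiber, which can be tangent to the fiber direction at most once. Hence $\Sigma$ is a Killing graph over $\Omega:=\pi(\Sigma)$, and $\Omega$ is convex because for any $p,q\in\Omega$ the intersection of $\Sigma$ with the vertical plane over the horizontal geodesic joining $p$ and $q$ is a strictly convex curve projecting onto that entire geodesic.

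The main obstacle is the continuation step: unlike the compact setting of Lemma \ref{Lem:emb}, the vertical planes here are non-compact and the bound $k_{i}>|\tau|$ is only pointwise, so properness of $\Sigma$ and the flatness of the vertical planes (which makes the strictly convex intersection curves behave like genuine plane convex curves in $\mathbb{R}^{2}$) must both be used to rule out limit self-intersections. A second delicate point is the global injectivity of $\pi|_{\Sigma}$ in the graph step when $\Sigma$ is non-compact, which requires sweeping the surface by foliations in every horizontal direction.
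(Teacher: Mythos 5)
Your proposal has the right general flavor (sweep by vertical planes, use Proposition \ref{Pro:convex}), but it contains several genuine gaps, two of which are fatal. First, the topology cannot be obtained the way you suggest. In a Killing submersion the Gauss equation reads $K_{\Sigma}=K_{e}+\tau^{2}+(\kappa-4\tau^{2})\nu^{2}$, so the hypothesis $k_{i}>|\tau|$ does \emph{not} give positive intrinsic curvature when $\kappa\leq 0$ (e.g.\ in Heisenberg space with $\nu^{2}$ close to $1$); a soul--theorem argument as in Theorem \ref{Theo:HScompact} is unavailable, and ``Bonnet along principal directions'' is not a substitute. In the paper the topology ($\s^{2}$ or $\r^{2}$) is extracted from the sweeping argument itself, not assumed beforehand. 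Relatedly, you invoke properness (``$P_{\beta(t)}\cap\Sigma=\emptyset$ for $t\ll 0$ by properness'') although properness is part of the conclusion, and even granting it there need be no vertical plane disjoint from $\Sigma$; the correct starting point, used in the paper, is a point $p$ where $N(p)$ is horizontal, so that $\Sigma$ is tangent to the vertical plane $\pi^{-1}(\alpha)$ at $p$ and lies locally on one side of it (the case where no such point exists is exactly the Killing-graph case). Note also that vertical planes are \emph{not} totally geodesic when $\tau\neq 0$ --- they are minimal with principal curvatures bounded by $|\tau|$, which is precisely why the hypothesis is $k_{i}>|\tau|$ rather than $k_{i}>0$ --- and Lemma \ref{Lem:emb} cannot be ``modelled on'' here, since its proof rests on the injectivity radius and Bonnet diameter bounds of a compact $1/4$-pinched manifold.

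The second, and most serious, omission is the case in which the slices $C(t)=P_{\beta}(t)\cap\Sigma$ cease to be compact. Over a Hadamard base the vertical planes are isometric to $\r^{2}$, so a strictly convex embedded curve in $P_{\beta}(t)$ may well be a complete non-compact arc; your argument implicitly assumes every slice is a closed Jordan curve bounding a convex cap. Ruling out self-intersections when some $C(\bar t)$ opens up is the technical heart of the proof: one must show $C(\bar t)$ is tilted, that $\partial_{\infty}\pi(C(\bar t))$ is a single point at infinity, and then perform a second sweep by vertical planes $\pi^{-1}(\Gamma_{\epsilon}(\theta))$ rotating about a point near the end of $C(t_{\epsilon})$, passing to a limit at the boundary at infinity to derive a contradiction. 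None of this appears in your outline, and without it the continuation argument does not close. The Killing-graph step is essentially right in spirit (if $\nu$ never vanishes, a strictly convex curve in a vertical plane meets each fiber at most once), but you should address the possibility that two points over the same fiber lie on \emph{different} components of $\Sigma\cap P$.
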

\begin{proof}
As in \cite[Theorem 3.3]{ES}, we distinguish two cases depending on the existence of
a point $p$ in $\Sigma$ where $N(p)$ is horizontal.

\begin{quote}
{\bf Case 1:} {\it Suppose there is no point $p \in \Sigma$ where $N(p)$ is
horizontal. Then, $\Sigma $ is embedded and homeomorphic to the plane. Moreover, it
is a Killing graph over a convex domain in $\m$.}
\end{quote}

{\it Proof of Case 1:} It is the same as Case 1 in \cite[Theorem 3.3]{ES}.

\begin{quote}
{\bf Case 2:} {\it Suppose there is a point $p \in \Sigma$ so that $N(p)$ is
horizontal. Then, $\Sigma $ is embedded and homeomorphic to the sphere or to the
plane.}
\end{quote}

{\it Proof of Case 2:} By assumption $N$ is horizontal at $p$ and so, the tangent
plane $T_p \Sigma$ is spanned by $\set{\xi (p) , X(p)}$, where $X(p)$ is horizontal.
Set $\bar p := \pi (p)$ and $v := d\pi _p (X(p))$. Let $\alpha $ be the complete
geodesic in $\m$ with initial conditions $\alpha (0) = \bar p$ and $\alpha ' (0) =
v$. Set $P:=\pi ^{-1}(\alpha)$. Note that $p \in P \cap \Sigma$ and the principal
curvatures of $\Sigma$ at $p$ are greater than the principal curvatures of $P$ at
$p$, thus $\Sigma$ lies (locally around $p$) on one side of $P$. Without loss of
generality we can assume that $N(p)$ points to $ext_{\mathcal{M}(\kappa,\tau)}(P)$
(see Definition \ref{Def:interiordomain}), therefore, $\Sigma$ lies (locally around
$p$) in $ext_{\mathcal{M}(\kappa,\tau)}(P)$. Moreover, we parametrize the boundary
at infinity by $B: [0,2\pi] \to \m (\infty)$ so that $B(0)= \alpha ^-$, $B(\pi)=
\alpha ^+$ and $\partial _{\infty} ext_{\mathcal{M}(\kappa,\tau)}(P) = B([0,\pi])$.
Also, from now on, we identify the points at infinity with the points of the
interval $[0,2\pi]$.

Let $N_P$ be the unit normal vector field along $P$ pointing into
$ext_{\mathcal{M}(\kappa,\tau)}(P)$. Then, there exists neighborhoods $V \subset P$
and $U \subset \Sigma$ so that
$$ U := \set{ {\rm exp}_q (f(q)N_P(q)) : \, q \in V } ,$$where
$f:V \to \r $ is a smooth function and ${\rm exp}$ is the exponential map in $\hm$.

Let $P_{\beta}(t)$ be the foliation of vertical planes along $\beta$ (see Definition
\ref{Def:folitaionplanes}). From Proposition \ref{Pro:convex} and the fact that
locally $\Sigma$ is (in exponential coordinates) a graph, there is  $\epsilon > 0$
such that the curves $P_{\beta}(t) \cap U$ are embedded strictly convex curves (in
$P_{\beta}(t)$) for $0<t<\epsilon$. Perhaps, $P_{\beta}(t) \cap \Sigma$ has other
components distinct from $C(t)$ for each $0<t < \epsilon$, but we only care how
$C(t)$ varies as $t$ increases. We also denote by $C(t)$ the continuous variation of
the curves $P_{\beta}(t)\cap \Sigma$ when $t< \epsilon$.

Here, we also distinguish two cases:

\begin{quote}
{\bf Case A:} {\it If $C(t)$ remains compact for all $t>0$, then $\Sigma$ is
properly embedded and homeomorphic to the sphere or to the plane.}
\end{quote}

{\it Proof of Case A:} The proof is as Case A in \cite[Theorem 3.3]{ES}.

\begin{quote}
{\bf Case B:} {\it If $C(t)$ becomes non-compact, then $\Sigma$ is a properly
embedded plane.}
\end{quote}

{\it Proof of Case B:} First, note that Claim 1 and 2 in \cite[Theorem 3.3]{ES}
remain valid in this context with the same proof, i.e.,

\begin{quote}
\textbf{Claim 1:} {\it $C(\bar{t})$ is tilted (see Definition
\ref{Def:veticalcurve})}.
\end{quote}

\begin{quote}
\textbf{Claim 2:} {\it $\bi \pi (C(\bar{t}))$ is one point.}
\end{quote}

Thus, at this point, and following the notation above, we have: Let $P_{\beta}(t)$
be the foliation of vertical planes along $\beta$, where $P(0)$ is the vertical
plane over which $\Sigma$ is locally a graph at $p\in \Sigma$. Moreover, such a
graphical part of $\Sigma$ is contained in $ext_{\mathcal{M}(\kappa,\tau)}(P)$. Note
that $\beta (0) = \pi (p)$ and $\beta '(0) = d\pi _p (N(p))$.

Let $\gamma _t $ be the complete geodesic in $\m $ passing through $\beta (t)$ and
orthogonal to $\beta $ at $\beta (t)$. Set $P_{\beta} (\bar t) = \pi
^{-1}(\gamma_{\bar t})$, $\gamma _{\bar t}= \set{\gamma_{\bar t} ^- , \gamma _{\bar
t}^+}$, we parametrize the boundary at infinity by $B:[0,2\pi]\to \m (\infty)$ so
that $B(0)=\gamma _{\bar t}^-$, $B(\pi) = \gamma _{\bar t}^+$ and $\partial
_{\infty} int_{\mathcal{M}(\kappa,\tau)}(P_{\beta}(\bar t)) =B([0,\pi])$. Also, we
already know that $\tilde \Sigma _1:= \bigcup _{0 \leq t \leq \bar t} C(t) \subset
\Sigma$ is connected and embedded. By Claim 1, we may assume $\partial _{\infty}
C(\bar t) = \set{\gamma _{\bar t}^-}$.

Set $\epsilon >0$. Fix $t_{\epsilon}<\bar t$, close enough to $\bar t$, so that $\pi
(C(t_{\epsilon})) = \gamma_{t_\epsilon}([a , b])$ for some $a , b \in \r$ (recall
that $C(t_\epsilon)$ is compact).

Denote by $\Gamma _{\epsilon}(\theta) $ the complete geodesic in $\m$ passing
through $\gamma _{t_\epsilon} (r_\epsilon)$ and making an angle $\theta$ with
$\gamma _{t_{\epsilon}}$ at $\gamma _{t_\epsilon} (r_\epsilon)$, $0 \leq \theta \leq
\pi$. Fix $r_\epsilon < a$ so that $\tilde \Sigma _1 \subset
int_{\mathcal{M}(\kappa,\tau)}\left(\pi^{-1}\left( \Gamma _\epsilon
(\theta)\right)\right)$ for all $0 < \theta \leq \pi /2$. We orient $\Gamma
_{\epsilon}(\theta )$ so that $ \Gamma _\epsilon (0) ^- = \gamma _{t_{\epsilon}}^-$,
i.e., so that $\Gamma _\epsilon (\theta ) ^-$ moves away from $\gamma
_{t_\epsilon}^-$ as $\theta $ increase from $0$. Also, set $Q(\theta , \epsilon ):=
\pi ^{-1}\left( \Gamma _\epsilon (\theta) \right)$.

Now, $C(t_\epsilon)$ is a connected component of $\Sigma \cap Q(\theta , \epsilon)$,
we denote by $C'(\theta ,\epsilon)$ the continuous variation of the curves $\Sigma
\cap Q(\theta , \epsilon)$ when $\theta $ increase, recall that $C(t_\epsilon) =
C'(0 ,\epsilon )$. Since $C(t_\epsilon )$ is a compact embedded curve in the
vertical plane $Q(0 , \epsilon )$, there exists $\theta _0 > 0$ so that $C'(\theta ,
\epsilon )$ remains compact and embedded in $\tilde \Sigma \cap Q(\theta ,\epsilon)$
for all $0 < \theta < \theta _0$.

Now, we have the following two possibilities:

\begin{quote}
\textbf{(a)} {\it There exists $\epsilon >0$ so that $ C'(\theta ,\epsilon)$ remains
compact for all $ \theta$ satisfying $ B^{-1}\left( \Gamma _\epsilon (0)^+\right) <
B^{-1}\left(\Gamma _\epsilon (\theta)^+ \right) < 2\pi  $. }
\end{quote}

If this were the case, arguing as in Case B.1 in \cite[Theorem 3.3]{ES}, $\Sigma$ is
properly embedded and homeomorphic to the plane.

\begin{quote}
\textbf{(b)} {\it For all $\epsilon >0$ there exists $\theta _\epsilon $ so that
$C'(\theta _{\epsilon} ,\epsilon)$ becomes non-compact. }
\end{quote}

We will show that ${\bf (b)}$ is not possible. Letting $\epsilon \to 0$, we get the
existence of two distinct points on the boundary at infinity $\eta ^- < \eta ^+$ so
that $\Gamma _\epsilon (\theta _\epsilon) ^- \to \eta ^-$ and $\Gamma _\epsilon
(\theta _\epsilon) ^+ \to \eta ^+$ as $\epsilon \to 0$. Note that $\eta ^- = \gamma
_{\bar t}^-$. Set $\eta =\set{\eta ^- , \eta ^+}$ (see Definition
\ref{Def:orientedgeod}).

Let $T(s)$ be the foliation by vertical planes along a geodesic orthogonal to $\eta$
so that $T(0):= \pi ^{-1}(\eta)$. Take the orientation so that
$int_{\mathcal{M}(\kappa,\tau)}(T(0))=int_{\mathcal{M}(\kappa,\tau)}(\pi^{-1}(\eta))$.

By construction, $\tilde \Sigma \subset int_{\mathcal{M}(\kappa,\tau)}(T(0))$ where
$\tilde \Sigma = \Sigma _1 \cup \tilde \Sigma _2$, here $\tilde \Sigma _2$ is the
union of all the compact (embedded) components of $C(\theta , \epsilon)$ associated
to the continuous variation of $C(t_\epsilon)$. Moreover, $T(s)\cap \tilde \Sigma $
is either a compact embedded strictly convex curve, or a point or empty, for all $s
<0$. Set $\tilde C (s)$ the continuous variation of $\tilde \Sigma \cap T(s)$. Thus,
$\tilde C(0) = \lim _{s\to 0}\tilde C (s)$ should be an open embedded strictly
convex curve in $T(0)$ so that $\bi \pi (C(0)) = \set{\eta ^- , \eta ^+}$. But this
is impossible by Claim 2. So, ${\bf (b)}$ is proved.

This completes the proof of Theorem \ref{Theo:HSzero}

\end{proof}

\section{Berger spheres}\label{Sect:Berger}

For an approach to Berger spheres, we refer the reader to \cite{T}. We will recall
here only the necessary tools we will need, and for that, we follow \cite{T}. A
Berger sphere, denoted by $\s ^3_B (\kappa , \tau)$, is the usual three dimensional
sphere
$$ \s ^3 := \set{(z,w)\in \c ^2 \, : \, \, |z|^2+|w|^2 =1} ,$$endowed with the
metric
$$ \meta{X}{Y}_{(\kappa, \tau)} := \frac{4}{\kappa}\left( \meta{X}{Y}+
\left(\frac{4\tau ^2}{\kappa}-1\right)\meta{X}{V}\meta{Y}{V}\right) ,$$here
$\meta{}{}$ denotes the standard round metric on $\s ^3$, $V : \s ^3 \to \s ^3$ is
given by $V(z,w):=(i z , i w)$, and $\kappa >0$ and $\tau \neq 0$ are constants.
Moreover, $\s ^3_B (\kappa , \tau)$ is a model for the homogeneous space $\hmf$
described above when $\kappa >0$.

The vertical Killing field is $\xi := \frac{\kappa}{4\tau ^2}V$. Now, set $ E_1 (z,
w) := (- \wb , \zb) $ and $E_2 (z,w):=(-i \wb, i \zb)$. Then, $\set{E_1,E_2,V}$ is
an orthonormal basis of $T\s^3_B (\kappa, \tau)$ which satisfies $|E_i|^2 =
4/\kappa$, $i=1,2$, and $|V|^2=16\tau ^2 /\kappa$. Moreover, the connection $\nabla$
associated to $\meta{}{}_{(\kappa, \tau)}$ is given by:
\begin{equation}\label{eq:connectionBerger}
\begin{array}{lll}
\nabla _{E_1} E_1 = 0, & \nabla _{E_1} E_2 = -V, & \nabla _{E_1} V = \frac{4\tau
^2}{\kappa}E_2\\[3mm]
\nabla _{E_2} E_1 = V, & \nabla _{E_2} E_2 = 0, & \nabla _{E_1} V = -\frac{4\tau
^2}{\kappa}E_1\\[3mm]
\nabla _{V} E_1 = \left(\frac{4\tau^2}{\kappa}-1\right)E_2, & \nabla _{V} E_2 =
-\left(\frac{4\tau^2}{\kappa}-1\right)E_1, & \nabla _{V} V = 0
\end{array}
\end{equation}

First, we need to compute the principal curvatures of any equator of $\s ^3$ as
submanifold of $\s ^3 _B (\kappa , \tau)$. To do so, we only need to compute the
principal curvatures of the one parameter family of equators given by

$$ \psi (x,y)= (\cos \, x\sin \, y, \cos \, x\cos \, y, \sin \, x \sin \,\theta, \sin \, x\cos \,\theta),
$$where $\theta \in [0,\pi/2]$ is a constant. Any other equator is a rotation and/or a translation (w.r.t. the Berger
metric) of one in this family.

\begin{proposition}\label{Pro:equator}
Let $\psi : [0,2\pi]\times [0,2\pi] \to \s ^3 _B (\kappa , \tau)$ be an equator
given, for $\theta \in [0,\pi]$, by

$$ \psi (x,y)= (\cos \, x\sin \, y, \cos \, x\cos \, y, \sin \, x \sin \,\theta, \sin \, x\cos
\,\theta).$$

Then, it is minimal, i.e., $H=0$, and its extrinsic $K_e$ curvature is
$$ K_e := -\frac{4\tau ^2(\kappa - 4\tau^2)^2 \cos ^4 x}{(\kappa + 4\tau ^2 - (\kappa - 4\tau ^2)\cos \, 2x)^2}. $$

In particular, its principal curvatures $k_i$ are bounded in absolute value by
$$ |k_i| \leq \abs{\left(\frac{k}{4\tau ^2}-1\right)\tau} .$$
\end{proposition}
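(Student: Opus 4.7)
The proof is a direct computation; the only inputs are the parametrization $\psi$, the definition of the Berger metric $\meta{}{}_{(\kappa,\tau)}$, and the connection table \eqref{eq:connectionBerger}. My plan is to work in the moving orthogonal frame $\{E_1,E_2,V\}$ along $\psi$. First I would evaluate $E_1(\psi)=(-\overline w,\overline z)$, $E_2(\psi)=(-i\overline w, i\overline z)$ and $V(\psi)=(iz,iw)$ at the explicit point $\psi(x,y)=(\cos x\sin y,\cos x\cos y,\sin x\sin\theta,\sin x\cos\theta)$, and compute $\psi_x,\psi_y\in T\s^3\subset\r^4$ by differentiating $\psi$ componentwise. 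Each of $\psi_x,\psi_y$ then decomposes as a trigonometric linear combination of $E_1(\psi), E_2(\psi), V(\psi)$, with coefficients depending only on $x,y,\theta$; this decomposition is the workhorse for everything that follows.

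Next, using $|E_i|^2_{(\kappa,\tau)}=4/\kappa$ and $|V|^2_{(\kappa,\tau)}=16\tau^2/\kappa$ together with the orthogonality of $\{E_1,E_2,V\}$ in the Berger metric, I would assemble the first fundamental form and choose a unit normal $N$: a natural choice is to take the vector in the span of $\{E_1,E_2,V\}$ that is $\meta{}{}_{(\kappa,\tau)}$-orthogonal to both $\psi_x$ and $\psi_y$, and normalize. Once $N=aE_1+bE_2+cV$ is written out, I would compute $\camb_{\psi_x}N$ and $\camb_{\psi_y}N$ by the product rule, using \eqref{eq:connectionBerger} to handle the derivatives of $E_1,E_2,V$ and differentiating the scalar coefficients $a,b,c$ in $x$ and $y$. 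The shape operator matrix is then read off by projecting $-\camb_{\psi_i}N$ back onto the $\{\psi_x,\psi_y\}$ basis. The trace of $S$ should collapse to $0$ by virtue of the symmetry between $E_1$ and $E_2$ in \eqref{eq:connectionBerger} (and the $y$-rotation invariance of the family), giving $H=0$; the determinant yields the stated formula for $K_e$ after trigonometric simplification.

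For the bound on the principal curvatures, since $H=0$ one has $k_1=-k_2$ and hence $k_i^2=-K_e$. Rewriting the denominator via $\cos 2x=2\cos^2 x-1$ gives
$$ \kappa+4\tau^2-(\kappa-4\tau^2)\cos 2x = 2(\kappa\sin^2 x+4\tau^2\cos^2 x), $$
so
$$ k_i^2=\frac{\tau^2(\kappa-4\tau^2)^2\cos^4 x}{(\kappa\sin^2 x+4\tau^2\cos^2 x)^2}\leq \frac{\tau^2(\kappa-4\tau^2)^2\cos^4 x}{(4\tau^2\cos^2 x)^2} = \frac{(\kappa-4\tau^2)^2}{16\tau^2}, $$
using $\kappa\sin^2 x\geq 0$ in the denominator. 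Taking square roots gives $|k_i|\leq |(\kappa/(4\tau^2)-1)\tau|$, as claimed.

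The main obstacle is purely mechanical: steps 2 and 3 involve carrying a moving frame along $\psi$ and tracking how the three nonzero terms of each $\nabla_{E_i}E_j$ in \eqref{eq:connectionBerger} recombine after substitution of the trigonometric coefficients of $\psi_x,\psi_y,N$. Verifying the cancellations that force $H=0$ and produce the clean $\cos^4 x$ numerator of $K_e$ is where the computation must be done with the most care; the bound on $|k_i|$ is then immediate from the algebraic inequality above.
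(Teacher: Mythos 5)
Your proposal is correct and follows essentially the same route as the paper: a direct computation in the orthogonal frame $\{E_1,E_2,V\}$ along $\psi$, using the connection table \eqref{eq:connectionBerger} to obtain the first and second fundamental forms (the paper computes $II$ via $\meta{\camb_{\psi_i}\psi_j}{N}_{(\kappa,\tau)}$ rather than via the Weingarten map, but this is the same calculation). Your explicit derivation of the bound $|k_i|\leq\abs{(\kappa/(4\tau^2)-1)\tau}$ from $k_1=-k_2$ and the identity $\kappa+4\tau^2-(\kappa-4\tau^2)\cos 2x=2(\kappa\sin^2x+4\tau^2\cos^2x)$ is correct and actually fills in a step the paper leaves implicit.
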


The proof of the above Proposition \ref{Pro:equator} will be given in Section
\ref{Sect:Proof}. Now, we have:

\begin{theorem}\label{Theo:Berger}
Let $\Sigma \subset \s ^3 _B (\kappa , \tau)$ be a complete connected immersed
surface so that $|k_i (p)|\geq \abs{\frac{\kappa - 4\tau ^2}{4\tau}}$ for all $p \in
\Sigma$, here $k_i$, $i=1,2$, denotes the principal curvatures of the immersion.
Then, $\Sigma$ is embedded and homeomorphic to a sphere.
\end{theorem}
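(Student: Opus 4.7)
The strategy will follow the proofs of Lemma \ref{Lem:emb} and Theorem \ref{Theo:HScompact}, replacing horizontal hyperplanes by equators of $\s^3_B(\kappa,\tau)$. The guiding observation is that Proposition \ref{Pro:equator} identifies $c:=\bigl|\tfrac{\kappa-4\tau^2}{4\tau}\bigr|$ as precisely the largest $|k_i|$ attainable by any equator, so the hypothesis on $\Sigma$ is exactly what is required to force every transverse intersection $\Sigma\cap E$ with an equator $E$ to be a locally strictly convex curve on $E$ --- a Berger-sphere analog of Proposition \ref{Pro:convex}, proved by computing the shape operator of $E$ via \eqref{eq:connectionBerger} and comparing the geodesic curvature of $\Sigma\cap E$ in $E$ with $|k_i^\Sigma|-|k_i^E|\geq 0$.

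I would first show $\Sigma$ is compact. The Gauss equation in $\hmf$, $K_\Sigma = K_e + \tau^2 + (\kappa-4\tau^2)\nu^2$ with $\nu=\meta{N}{\xi}$, together with $|K_e|\geq c^2$ and continuity of the principal curvature functions on the connected $\Sigma$ (the $k_i$ cannot cross the forbidden interval $(-c,c)$, so each has a definite sign globally), reduces matters to the locally convex case $K_e\geq c^2$. Feeding this back into the Gauss equation produces a positive lower bound on $K_\Sigma$, whence Bonnet's theorem gives compactness. The mixed-signs case must be ruled out separately (for instance by a Gauss--Bonnet obstruction once compactness is known, or by an a priori argument excluding uniformly hyperbolic complete surfaces in $\s^3_B(\kappa,\tau)$).

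Embeddedness would then follow by a deformation argument in the spirit of Lemma \ref{Lem:emb}. Choose a one-parameter family $\{E_s\}$ of equators sweeping $\s^3_B(\kappa,\tau)$ to a first tangential contact with $\Sigma$ at some $E_{s_0}$: for $s>s_0$ small, $\Sigma\cap E_s$ is, by the convex-slicing tool, an embedded strictly convex Jordan curve on $E_s$ bounding a disk; propagating along $s$, the family of intersection curves stays embedded, and this forces $\Sigma$ to be an embedded topological sphere. The main obstacle I anticipate is that the natural families of equators (such as the $\theta$-family of Proposition \ref{Pro:equator}) are not true foliations of $\s^3_B(\kappa,\tau)$, since any two such equators share a common great circle; so the sweep must be arranged locally near the first contact point and carried through non-transverse moments with care, analogous to how Lemma \ref{Lem:emb} handles multiple-point contacts via length-vs-injectivity-radius estimates. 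A secondary difficulty is the explicit verification, via \eqref{eq:connectionBerger}, of the Berger analog of Proposition \ref{Pro:convex} for tilted equators.
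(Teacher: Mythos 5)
Your proposal misses the key idea of the paper's proof, and the route you substitute for it has an unresolved obstruction at its core. The paper does not sweep $\s^3_B(\kappa,\tau)$ by equators at all. It observes that every equator is totally geodesic for the \emph{round} metric $\meta{}{}$, so at each $p\in\Sigma$ the round second fundamental form of $\Sigma$ differs from the Berger one precisely by the Berger second fundamental form of the equator tangent to $\Sigma$ at $p$; by Proposition \ref{Pro:equator} that correction has principal curvatures bounded in absolute value by $\abs{\frac{\kappa-4\tau^2}{4\tau}}$, so the hypothesis forces $\Sigma$ to have non-negative principal curvatures as a surface of the round $\s^3$. Combined with the elementary comparison $\meta{X}{X}_{(\kappa,\tau)}\leq a^2\norm{X}^2$, which transfers completeness to the round metric, the theorem reduces to do Carmo--Warner's convexity theorem in $(\s^3,\meta{}{})$, which delivers compactness, embeddedness and the sphere topology in one stroke. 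No foliation, no Bonnet estimate, and no analogue of Lemma \ref{Lem:emb} is needed.

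The difficulty you flag yourself --- that equators do not foliate $\s^3$, since any two great $2$-spheres meet along a great circle --- is fatal to your plan rather than a technicality: the sweeping arguments of Theorems \ref{Theo:HScompact} and \ref{Theo:HSzero} rest entirely on a global disjoint family of totally geodesic leaves ($\man^n\times\set{t}$, respectively the vertical planes $P_\beta(t)$), and there is literally no one-parameter family of pairwise disjoint equators along which to continue the slice curves, so ``arranging the sweep locally and carrying it through non-transverse moments'' has no content to fill it. Two further steps are also shaky. Your compactness argument does not close: from $K_e\geq c^2$ the Gauss equation only yields $K_\Sigma\geq c^2+\tau^2+\min(0,\kappa-4\tau^2)$, and when $\kappa-4\tau^2<0$ with large modulus this lower bound is negative, so Bonnet does not apply directly. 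The slice curves $\Sigma\cap E$ would live in $E$ with its induced Berger metric, which is not round and need not be $1/4$-pinched, so the Lemma \ref{Lem:emb} machinery is unavailable there as well. Finally, the mixed-sign case you propose to ``rule out separately'' cannot be ruled out from the stated hypothesis: a Hopf torus over a great circle has principal curvatures $\pm\abs{\tau}$ and satisfies $|k_i|\geq\abs{\frac{\kappa-4\tau^2}{4\tau}}$ whenever $\kappa\leq 8\tau^2$, so the hypothesis must be read as local strict convexity (both $k_i$ of one sign), exactly as the paper's proof implicitly does; any plan that treats the mixed-sign case as a removable subcase is chasing something false.
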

\begin{proof}
First, note that $\Sigma$ is orientable by the assumptions on the principal
curvatures. Since the principal curvatures of the immersion are greater or equals
than any equator (see Proposition \ref{Pro:equator}), $\Sigma$ is locally on one
side of its tangent equator at each point (note that the intersection can be more
than one point, but, in any case, locally $\Sigma$ is at one side). Thus, if we
endow $\s ^3$ with the usual round metric, this means that $\Sigma$ has principal
curvatures greater or equals than zero at any point.

\begin{quote}
{\bf Claim 1:}{\it If $\Sigma \subset \s ^3 _ B (\kappa , \tau)$ is complete, then
$\Sigma \subset (\s ^3 , \meta{}{})$ is complete.}
\end{quote}

{\it Proof of Claim 1:} To see this, we can easily check that, for $X \in \campo (\s
^3)$, we have

\begin{equation*}
\begin{split}
\meta{X}{X}_{(\kappa , \tau)} & \leq \frac{4}{\kappa}\left( \norm{X}^2 +
\abs{\frac{4\tau ^2}{\kappa } -1} \meta{X}{V}^2\right) \\
 & \leq a ^2 \norm{X}^2 ,
\end{split}
\end{equation*}where $\norm{ \cdot }$ denotes the norm w.r.t. $\meta{ \cdot }{ \cdot }$, and
$$a^2 := \frac{4}{\kappa}\left( 1 + \abs{\frac{4\tau ^2}{\kappa}-1}\right).$$

This proves Claim 1.
\begin{flushright}
$\square$
\end{flushright}

That is, $\Sigma \subset (\s ^3 , \meta{}{})$ is a complete oriented connected
immersed surface whose principal curvatures are non-negative at any point. Then,
from \cite[Theorem 1.1]{CW}, $\Sigma$ is embedded and homeomorphic to a sphere.
Moreover, $\Sigma$ has to be contained in an open hemisphere. Note that, from
\cite[Theorem 1.1]{CW}, $\Sigma \subset (\s ^3 , \meta{}{})$ could be an equator,
but our original surface immersed in $\s ^3 _B (\kappa, \tau)$ is not (since both of
its principal curvatures are non-negative).

This finishes the proof.
\end{proof}

\subsection{A note on the Heisenberg space}

One can prove Theorem \ref{Theo:HSzero} in the particular case of Heisenberg space,
by using the same methods as in Theorem \ref{Theo:Berger}. Heisenberg space (see
\cite{D} for details), denoted by ${\rm Nil}_3 (\tau)$, is the usual $3-$dimensional
Euclidean space $\r ^3$ endowed with the metric
$$g_N := dx^2 + dy^2 + (\tau(y \, dx - x \, dy) +dz )^2 , $$where $(x,y,z)$ are the
standard coordinates in $\r ^3$, and $\tau \neq 0$.

Then, it is not hard to see that the principal curvatures $k^P_i$, $i=1,2$, of any
affine plane $P$, as a submanifold of ${\rm Nil}_3 (\tau)$, verify
$$ |k^P_i| \leq \tau , \, \, i=1,2.$$

Thus, if $\Sigma$ is a complete immersed surface whose principal curvatures are
greater than $\tau$ at any point, this implies that $\Sigma$ is locally on one side
of its tangent affine plane at that point. And so, it implies that $\Sigma \subset
(\r ^3 , g_0)$, where $g_0$ is the standard metric in the Euclidean space, is
locally strictly convex. Moreover, one can also check that a complete surface in
${\rm Nil}_3 (\tau)$ is complete in $\r ^3$. Thus, Stoker's Theorem \cite{S} implies
that $\Sigma$ is properly embedded and homeomorphic to the plane or to the sphere.

\section{Proof of Proposition \ref{Pro:equator}}\label{Sect:Proof}

Here, we include the proof of Proposition \ref{Pro:equator} for completeness. The
proof is based on tedious and straightforward computations.

First, we compute the orthogonal basis $\set{E_1, E_2, V}$ along $\psi$. It is easy
to check that
\begin{eqnarray*}
E_1 &=& (-\sin\, x\sin\, \theta, \sin\, x\cos\, \theta, \cos\, x\sin\, y, -\cos\,
x\cos\, y),\\[3mm]
E_2 &=& (-\sin\, x\cos\, \theta, -\sin\, x\sin\, \theta, \cos\, x\cos\, y, \cos\,
x\sin\, y),\\[3mm]
V&=& (-\cos\, x \cos \, y, \cos\, x \sin \, y, -\sin\, x\cos\,\theta, \sin\, x\sin\,
\theta)
\end{eqnarray*}

Second, we compute the partial derivatives of the immersion, which are given by:
\begin{eqnarray*}
\psi _x &=& (-\sin\, x \sin\, y, -\cos\, y \sin\, x, \cos\, x \sin\, \theta, \cos\, \theta \cos\, x), \\[3mm]
\psi _y &=& (\cos\, x \cos\, y, -\cos\, x \sin\, y, 0, 0).
\end{eqnarray*}

Now, we relate $\set{\psi _x , \psi _ y}$ in terms of $\set{E_1, E_2, V}$, that is:
\begin{eqnarray*}
\psi _x &=& -\cos(y+\theta)E_1 + \sin (y +\theta) E_2 ,\\[3mm]
\psi _y &=& -\frac{1}{2}\sin(2x)\sin(y+\theta)E_1
-\frac{1}{2}\sin(2x)\cos(y+\theta)E_2 - \cos ^2 x \, V .
\end{eqnarray*}

From the above equations, it is easy to see that the unit normal vector field is
given by
$$ N = -\alpha \left( \cos \, x \sin (y +\theta) E_1 + \cos \, x \cos (y +\theta) E_2
-\frac{\kappa}{4\tau ^2}\sin \, x \, V\right),$$ where
$$\alpha = \sqrt{\frac{2\kappa \tau ^2}{\kappa + 4\tau ^2 -(\kappa - 4\tau^2)\cos (2x)}}$$

The next step is to compute the covariant derivatives $\nabla _{\psi _x}\psi _x$,
$\nabla _{\psi _x}\psi _y = \nabla _{\psi _y} \psi _x$ and $\nabla _{\psi _y}\psi
_y$. To do so, we use \eqref{eq:connectionBerger} and the expressions of $\psi _x$
and $\psi _y$ in terms of $\set{E_1,E_2,V}$. So, we get:
\begin{eqnarray*}
\nabla _{\psi_x}\psi_x &=&0 \\[3mm]
\nabla _{\psi_x}\psi_y &=&\frac{(2\tau ^2- (\kappa - 2\tau ^2)\cos(2x))\sin \,
\theta \sin (y +\theta)}{4\alpha}E_1 \\
 & &+ \frac{(2\tau ^2- (\kappa - 2\tau ^2)\cos(2x))\sin \, \theta \cos (y
    +\theta)}{4\alpha}E_2 \\
 & &+ \frac{\kappa}{8\alpha}\sin \,\theta \sin (2x)V\\[3mm]
\nabla _{\psi_y}\psi_y &=&-\frac{(4\tau ^2- (\kappa - 4\tau ^2)\cos(2x))\sin \,
\theta \sin (2x) \cos (y +\theta)}{8\alpha}E_1 \\
 & &+ \frac{(4\tau ^2- (\kappa - 4\tau ^2)\cos(2x))\sin \,
\theta \sin (2x) \sin (y +\theta)}{8\alpha}E_2 .
\end{eqnarray*}

Thus, the coefficients of the first, $I$, and second, $II$, fundamental forms are
given by:
\begin{eqnarray*}
I(\psi _x , \psi _x) &=& \frac{4}{\kappa} \\
I(\psi _x , \psi _y) &=& 0 \\
I(\psi _x , \psi _x) &=& \frac{4\tau ^2}{\kappa \alpha ^2} \cos ^2 x
\end{eqnarray*}

\begin{eqnarray*}
II(\psi _x , \psi _x) &=& 0 \\
II(\psi _x , \psi _y) &=& 4\alpha (\kappa -4\tau ^2)\cos ^3 x  \\
II(\psi _x , \psi _x) &=& 0
\end{eqnarray*}

From the above expressions, we obtain that $H=0$ and the extrinsic curvature $K_e$
is given by
$$ K_e = - \frac{\alpha ^4 (\kappa -4\tau ^2)^2 \cos ^4 x}{\tau ^2 \kappa ^2} .$$

Since $H=0$ and the expression of the extrinsic curvature given above, we have

$$ |k_i| \leq \abs{\left( \frac{\kappa }{4\tau ^2}-1\right)\tau} ,$$where $k_i$,
$i=1,2$, are the principal curvatures. This finishes the proof of Proposition
\ref{Pro:equator}.

\end{document}